\def\blfootnote{\xdef\@thefnmark{}\@footnotetext}
\date{\today%
    \protect\blfootnote{\copyright{\ N.~Heuer, C.~L\"oh 2019}. 
    This work was supported by the CRC~1085 \emph{Higher Invariants} 
    (Universit\"at Regensburg, funded by the DFG).
    \\
    MSC~2010 classification: 57N65, 57M07, 20J05, 11J86, 03D78}}
\def\args{\;\cdot\;}
\def\varepsilon{\epsilon}
\def\longrightarrow{\to}
\def\longmapsto{\mapsto}
\def\phi{\varphi}
\newtheorem{thm}{Theorem}[section]
\newtheorem{lemma}[thm]{Lemma}
\newtheorem{prop}[thm]{Proposition}
\newtheorem{corr}[thm]{Corollary}
\newtheorem{claim}[thm]{Claim}
\theoremstyle{remark}
\newtheorem{rmk}[thm]{Remark}
\theoremstyle{definition}
\newtheorem{defn}[thm]{Definition}
\newtheorem{exmp}[thm]{Example}
\newtheorem{setup}[thm]{Setup}
\newtheorem{quest}[thm]{Question}
\theoremstyle{theorem}
\newtheorem{theorem}{Theorem}
\newenvironment{equ*}[1]{\begin{IEEEeqnarray*}{#1}}{\end{IEEEeqnarray*}}
\newcommand{\R}{\mathbb{R}}
\newcommand{\Q}{\mathbb{Q}}
\newcommand{\Z}{\mathbb{Z}}
\newcommand{\N}{\mathbb{N}}
\DeclareMathOperator{\scl}{scl}
\newcommand{\Const}{\mathbb{R}^c_{>0}}
\newcommand{\Prim}{\mathbb{P}}
\newcommand{\eurm}{\mathrm{eu}}
\def\eu#1#2{%
  {}^{#2}\eurm^{#1}}
\DeclareMathOperator{\rot}{rot}
\DeclareMathOperator{\SV}{SV}
\DeclareMathOperator{\id}{id}
\DeclareMathOperator{\im}{im}
\DeclareMathOperator{\Hom}{Hom}
\DeclareMathOperator{\trig}{trig}
\newcommand{\col}{\colon}
\def\qand{\quad\text{and}\quad}
\def\exi#1{%
  \exists_{#1}\;\;\;}
\DeclareMathOperator{\SL}{SL}
\def\SLS{%
  \SL_2(\Z[1/2])}
\DeclareMathOperator{\tr}{tr}
\DeclareMathOperator{\Ext}{Ext}
\title{Transcendental simplicial volumes}
\author{Nicolaus Heuer, Clara L\"oh}
\begin{document}

\maketitle

\begin{abstract}
  We show that there exist closed manifolds with arbitrarily small
  transcendental simplicial volumes. Moreover, we exhibit an explicit
  family of (transcendental) real numbers that are
  \emph{not} realised as the simplicial volume of a closed manifold.
\end{abstract}

\section{Introduction}

The simplicial volume~$\|M\| \in \R_{\geq 0}$ is a homotopy invariant
of oriented closed connected manifolds~$M$~\cite{munkholm,vbc}, namely
the $\ell^1$-semi-norm of the (singular) $\R$-fundamental class.  The
set~$\SV(d) \subset \R_{\geq 0}$ of simplicial volumes of oriented
closed connected $d$-manifolds is countable and can be determined
explicitly in dimensions~$1$,~$2$,~$3$ through classification
results~\cite[Section 2.2]{heuerloeh4mfd}. In these dimensions, simplicial
volume has a gap at~$0$.

In previous work~\cite{heuerloeh4mfd}, we showed that those are the
only dimensions with a gap and that indeed $\SV(d)$ is
dense in~$\R_{\geq 0}$ for $d \in \N_{\geq 4}$.  We also showed that
$\SV(4)$ contains~$\Q_{\geq 0}$.  We now continue these
investigations, with a focus on transcendental values.

\begin{theorem}\label{theorem:transsimvol} 
  For every~$\varepsilon \in \R_{>0}$, there exists an oriented closed
  connected $4$-manifold~$M$ such that
  \begin{itemize}
  \item $\|M\|$ is transcendental (over~$\Q$) and
  \item $0 < \| M \| < \varepsilon$.
  \end{itemize}
\end{theorem}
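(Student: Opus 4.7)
\emph{Strategy.} The plan is to combine two ingredients. First, our earlier work~\cite{heuerloeh4mfd} provides a realisation construction that turns a group element~$g$ in the commutator subgroup of a suitable group into an oriented closed connected $4$-manifold~$M_g$ with $\|M_g\|$ equal to a rational multiple of the stable commutator length~$\scl(g)$. It therefore suffices to exhibit, for every~$\varepsilon > 0$, an element~$g$ in some group with $0 < \scl(g) < \varepsilon$ and $\scl(g)$ transcendental over~$\Q$.

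\emph{Transcendental and small stable commutator length.} For this I would work in a group whose stable commutator length is governed by a rotation-number quasimorphism, for example $\SLS$ acting on the circle via the inclusion $\SLS \subset \SL_2(\R)$, or a free product containing such a group. Bavard duality applied to the bounded Euler class yields the one-sided estimate $\scl(g) \geq |\rot(g)|/2$. For elliptic matrices in~$\SLS$ with trace $t \in \Z[1/2] \cap (-2,2)$ and eigenvalues~$\lambda, \overline\lambda$ on the unit circle, the rotation number equals $\arg(\lambda)/\pi$ up to normalisation; since~$\lambda$ is algebraic of degree two and, for generic~$t$, not a root of unity, the Gelfond--Baker theorem on linear forms in logarithms of algebraic numbers forces $\arg(\lambda)/\pi$ to be transcendental. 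Letting $t$ approach~$2$ along such a sequence of traces then produces arbitrarily small, positive, transcendental rotation numbers.

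\emph{Main obstacle.} The central difficulty will be to upgrade the one-sided estimate $\scl(g) \geq |\rot(g)|/2$ to an \emph{equality up to a rational error}, so that transcendence of~$\rot(g)$ transfers to~$\scl(g)$. This needs either explicit bounding surfaces for powers of~$g$ delivering the matching upper bound, or a sufficiently complete description of the homogeneous quasimorphisms on the ambient group to guarantee that the Euler-class quasimorphism realises the Bavard supremum up to rationals. The secondary, more bookkeeping-type task is to verify, by tracing through the construction of~\cite{heuerloeh4mfd}, that the realisation factor between $\|M_g\|$ and~$\scl(g)$ is genuinely rational, so that both the transcendence and the smallness of~$\scl(g)$ are inherited by~$\|M_g\|$.
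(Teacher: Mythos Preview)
Your overall architecture matches the paper's, but you misidentify where the real work lies, and in doing so you leave a genuine gap.

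First, the realisation theorem from~\cite{heuerloeh4mfd} (Theorem~F there; Theorem~\ref{thm:simvolscl} here) has hypotheses: the ambient group must be finitely presented \emph{and} satisfy $H_2(\Gamma;\R)\cong 0$. Neither $\SLS$ nor a free product containing it meets the second condition, so you cannot feed these groups into the construction. You do not mention this constraint, and it is precisely what forces the choice of group.

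Second, you cannot work in~$\SLS$ itself: it is a standard fact (Proposition~\ref{prop:lowdegSLS}) that $\SLS$ admits \emph{no} non-trivial homogeneous quasimorphisms, so by Bavard duality $\scl$ vanishes identically on its commutator subgroup. The rotation number is not a quasimorphism on~$\SLS$; it only becomes one after lifting to the central extension~$\widetilde\Gamma$ associated with the integral Euler class. In~$\widetilde\Gamma$ the rotation number is, up to scale, the \emph{unique} homogeneous quasimorphism, and the exact equality $\scl_{\widetilde\Gamma}(\widetilde g)=|\rot(\widetilde g)|/2$ is already established by Calegari~\cite[Example~5.38]{Calegari}. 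So what you flag as the ``main obstacle'' --- upgrading Bavard's lower bound to an equality --- is not an obstacle at all once you pass to~$\widetilde\Gamma$.

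The actual technical content of the proof, which your proposal does not anticipate, is to verify that $\widetilde\Gamma$ itself satisfies the hypotheses of the realisation theorem: that it is finitely presented, that $H_2(\widetilde\Gamma;\R)\cong 0$, and that $H_1(\widetilde\Gamma;\Z)$ is finite (so that a fixed power of any element lies in the commutator subgroup). This is Theorem~\ref{theorem:SLSEnew}, proved via an Eckmann--Hilton--Stammbach exact sequence for central extensions (Proposition~\ref{prop:H_2}). Your concern about the realisation factor is harmless: it is the explicit integer~$48$, and the passage to the commutator subgroup introduces only the further integer factor~$K=|H_1(\widetilde\Gamma;\Z)|$. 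The transcendence of $\arccos(1-2^{-n-1})/\pi$ is handled, as you suggest, via Gelfond--Schneider together with Niven's theorem (Proposition~\ref{prop:antrans}).
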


In fact, we provide an explicit sequence of transcendental simplicial
volumes of $4$-mani\-folds converging to zero that are linearly
independent over the algebraic numbers
(Theorem~\ref{thm:transsimvol_explicit}). 

We also give explicit examples of real numbers that are not realised
as a simplicial volume: 

\begin{theorem}\label{theorem:nonsimvol}
  Let~$d\in \N$ and let $A \subset \N$ be a subset that is
  recursively enumerable but not recursive. Then
  \[ \alpha := \sum_{n \in A} 2^{-n}
  \]
  is transcendental (over~$\Q$) and there is \emph{no} oriented closed
  connected $d$-mani\-fold~$M$ with~$\|M\| \in \Const \cdot \alpha$,
  where $\Const$ is the set of positive computable
  numbers.
\end{theorem}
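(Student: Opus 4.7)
The plan is to deduce the theorem from two independent computability-theoretic facts, one about $\alpha$ and one about simplicial volume, both phrased in terms of reals being left- or right-computable (the limit of an effective monotone sequence of rationals from below or above).

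First, I claim that $\alpha$ is left-computable but not computable, and in particular transcendental. A recursive enumeration $a_1, a_2, \ldots$ of $A$ yields a computable increasing sequence of dyadic rationals $s_n := \sum_{i \le n} 2^{-a_i} \uparrow \alpha$, so $\alpha$ is left-computable. Because $A$ is not recursive, it is neither finite nor cofinite, so $\alpha$ is not dyadic and its unique binary expansion is exactly $\chi_A$. A real-number algorithm computing $\alpha$ would then decide $A$, a contradiction. Since every algebraic real is computable (via Sturm sequences applied to a defining polynomial), $\alpha$ must be transcendental, which already takes care of the first assertion of the theorem.

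Second, I plan to establish that for every oriented closed connected $d$-mani\-fold~$M$ the simplicial volume $\|M\|$ is \emph{right-computable}, i.e.\ the limit of a computable decreasing sequence of rationals. The idea is to fix a finite combinatorial model of~$M$ (a finite simplicial complex, CW-structure, or a Gromov multicomplex) and recursively enumerate rational cycles representing the fundamental class; density of $\Q$ in $\R$ together with linearity of the cycle condition should make their $\ell^1$-norms a recursively enumerable set of rationals whose infimum equals~$\|M\|$.

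Granted both facts, suppose for contradiction that $\|M\| = c \cdot \alpha$ for some $c \in \Const$ and some closed $d$-manifold $M$. Since $c$ is positive and computable, $\alpha = \|M\|/c$ inherits right-computability from~$\|M\|$; combined with left-computability from the first paragraph, $\alpha$ would be computable, contradicting its non-computability. Hence no such~$M$ exists.

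The main obstacle is right-computability of $\|M\|$. The naive ``enumerate all rational singular cycles'' approach fails, since the set of singular $d$-simplices of~$M$ is uncountable. One must exhibit a recursively enumerable family of finitely describable rational cycles representing~$[M]$ whose $\ell^1$-norms are cofinal from above in~$\|M\|$. The natural route is to pass to a finite simplicial model or multicomplex and use iterated subdivisions together with rational coefficient approximation; the delicate point is to check tightness, i.e.\ that the enumerable infimum actually equals $\|M\|$ rather than merely an upper bound for it.
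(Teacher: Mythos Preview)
Your overall strategy coincides with the paper's: show that $\alpha$ is left-computable but not computable (hence not right-computable, hence transcendental), show that every simplicial volume is right-computable, and conclude by the closure of right-computability under division by positive computable numbers. The reduction and the contradiction argument are exactly as in the paper.

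The point you flag as the ``main obstacle'' is indeed the only nontrivial step, and your proposal stops short of resolving it. The paper handles tightness as follows. First it replaces rational coefficients by integer ones via
\[
\|M\| \;=\; \bigl\|[T]_\Q\bigr\|_{1,\Q} \;=\; \inf_{m\in\N_{>0}} \frac{\bigl\| m\cdot[T]_\Z\bigr\|_{1,\Z}}{m},
\]
so it suffices to recursively enumerate, for each~$m$, integral cycles representing $m\cdot[T]_\Z$. Then, for a fixed finite simplicial model~$T$ of~$M$, it invokes inductive \emph{simplicial approximation}: every singular $\Z$-cycle on~$|T|$ is homologous to a \emph{combinatorial} singular cycle (each simplex is a simplicial map from an iterated barycentric subdivision of~$\Delta^d$ to an iterated barycentric subdivision of~$T$) of no greater $\ell^1$-norm. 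This is precisely the tightness you ask for, and the combinatorial cycles are finitely describable and effectively enumerable; checking whether such a cycle represents $m\cdot[T]_\Z$ is a finite simplicial-homology computation. Your phrase ``iterated subdivisions'' points in the right direction, but the missing ingredient is the norm-nonincreasing simplicial approximation step, without which you only get an upper bound.
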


There are many recursively enumerable but non-recursive subsets of
$\N$: for example, every encoding of the halting sequence~\cite[Section
  7]{Cutland}; moreover, $1 \in \Const$. Hence,
Theorem~\ref{theorem:nonsimvol} provides concrete examples of
countably many transcendental numbers that are \emph{not} realised as
the simplicial volume of closed manifolds.

We previously explored connections between stable commutator length on
finitely presented groups and simplicial
volume~\cite{heuerloeh_onerel}\cite[Theorem~C/F]{heuerloeh4mfd}; see
also Theorem \ref{thm:simvolscl}. Stable commutator length is now well studied in many classes of groups, thanks largely to Calegari and others \cite{Calegari, Calegari_rational, zhuang, CalegariFujiwara, ChenHeuer}. Our constructions for the transcendental values of simplicial volumes in Theorems \ref{theorem:transsimvol} and \ref{thm:transsimvol_explicit}
rely on computations by Calegari~\cite[Chapter~5]{Calegari}.

However, it is unknown which real non-negative numbers
are generally realised as the stable commutator length of elements in finitely
presented groups. For the larger class of \emph{recursively} presented
groups, the set of stable commutator length is known and coincides
with the set of right-computable
numbers~\cite{Heuer-scl-rp-groups}. Thus we ask:

\begin{quest}
  Does the set of simplicial volumes of oriented closed connected
  $4$-man\-i\-folds coincide with the set of non-negative right-computable real
  numbers?
\end{quest}

\subsection*{Proof of Theorem~\ref{theorem:transsimvol}}

Theorem~\ref{theorem:transsimvol} will follow from the following
explicit construction of simplicial volumes:

\begin{theorem} \label{thm:transsimvol_explicit}
  There exists a constant~$K \in \N_{>0}$ and a sequence $(M_n)_{n \in \N}$ of
  oriented closed connected $4$-mani\-folds with
  \[
  \| M_n \| = K \cdot \frac{24 \cdot \arccos(1 - 2^{-n-1})}{\pi}
  \]
  for all~$n\in \N$. 
  The numbers~$\alpha_n := 24 \cdot \arccos(1-2^{-n-1})/\pi$ have the
  following properties:
  \begin{enumerate}
  \item We have~$\lim_{n \rightarrow \infty} \alpha_n = 0$.
  \item We have~$\alpha_0 = 8$ and for each~$n \in \N_{>0}$, the number~$\alpha_n$ is
    transcendental~(over~$\Q$).
  \item The family~$(\alpha_{p-2})_{p \in \Prim}$ is linearly independent
    over the field of algebraic numbers; here, $\Prim \subset \N$
    denotes the set of all prime numbers.
  \end{enumerate}
\end{theorem}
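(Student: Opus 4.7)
The plan is to construct the manifolds $M_n$ via the scl-to-simplicial-volume transfer of Theorem~\ref{thm:simvolscl} and then to verify the three arithmetic properties of $(\alpha_n)_{n \in \N}$. For the construction: Calegari's calculations~\cite[Chapter~5]{Calegari} furnish, for each $n \in \N$, an element $w_n$ in a suitable finitely presented group whose stable commutator length equals a fixed rational multiple of $\arccos(1-2^{-n-1})/\pi$. Feeding these $w_n$ into the scl-to-simplicial-volume machinery of Theorem~\ref{thm:simvolscl} produces oriented closed connected $4$-manifolds $M_n$ whose simplicial volume is a fixed positive integer multiple of $\scl(w_n)$; absorbing universal constants yields $\|M_n\| = K\cdot \alpha_n$ for a single $K \in \N_{>0}$ independent of~$n$.

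Properties~(1) and (2) are then largely elementary. Continuity of $\arccos$ at $1$ gives $\alpha_n \to 0$, and $\arccos(1/2) = \pi/3$ yields $\alpha_0 = 8$. For the transcendence of $\alpha_n$ with $n \geq 1$, set $\beta_n := 1 - 2^{-n-1} \in \Q \setminus \{0, \pm 1/2, \pm 1\}$. If $\arccos(\beta_n)/\pi \in \Q$, then Niven's theorem forces $\beta_n \in \{0, \pm 1/2, \pm 1\}$, a contradiction. If instead $\arccos(\beta_n)/\pi$ is algebraic irrational, the Gelfond--Schneider theorem applied with base $-1$ and exponent $\arccos(\beta_n)/\pi$ shows that
\[
(-1)^{\arccos(\beta_n)/\pi} = e^{i\arccos(\beta_n)} = \beta_n + i\sqrt{1 - \beta_n^2}
\]
is transcendental, contradicting that this number is algebraic (as $\beta_n \in \Q$).

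For property~(3), set $z_n := \beta_n + i\sqrt{1-\beta_n^2}$, an algebraic number on the unit circle with $\log z_n = i\arccos(\beta_n)$ on the principal branch. A $\overline{\Q}$-linear relation among the $\alpha_{p-2}$, $p \in \Prim$, translates directly into a $\overline{\Q}$-linear relation among the logarithms $\log z_{p-2}$. By Baker's theorem on linear forms in logarithms of algebraic numbers, the claim reduces to ruling out nontrivial $\Q$-linear relations among the $\log z_{p-2}$, equivalently to showing that the family $(z_{p-2})_{p \in \Prim}$ admits no nontrivial multiplicative relation in $\overline{\Q}^*$ beyond $z_0^6 = 1$. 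The factorisation $z_n = (2^{n+1} - 1 + i\sqrt{2^{n+2} - 1})/2^{n+1}$ places $z_{p-2}$ into the imaginary quadratic field $\Q(\sqrt{-(2^p - 1)})$, and the required independence is to be verified by a prime-ideal factorisation argument, after handling possible coincidences among these fields for distinct primes~$p$.

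The main obstacle, once Calegari's scl values have been transported to simplicial volumes via Theorem~\ref{thm:simvolscl}, is the multiplicative independence step in property~(3): one must control the prime decompositions of $2$ and of $(2^{p-1}-1) + i\sqrt{2^p-1}$ inside $\Q(\sqrt{-(2^p-1)})$ precisely enough to verify the $\Q$-linear independence hypothesis of Baker's theorem. The remaining steps reduce to standard transcendence-theoretic input and construction bookkeeping.
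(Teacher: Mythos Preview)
Your plan matches the paper's in broad outline but underestimates one step and diverges in another.

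\textbf{The construction is not ``bookkeeping''.} You write that Calegari supplies elements~$w_n$ in a ``suitable finitely presented group'' and that Theorem~\ref{thm:simvolscl} then produces the~$M_n$. Theorem~\ref{thm:simvolscl} requires that the ambient group~$\Gamma$ be finitely presented \emph{and} satisfy~$H_2(\Gamma;\R)\cong 0$, and that the element lie in~$[\Gamma,\Gamma]$. Calegari's rotation-number computation lives in the central Euler extension~$\widetilde\Gamma$ of~$\SLS$, and none of these three hypotheses is free: the paper devotes Theorem~\ref{theorem:SLSEnew} and the surrounding Section~\ref{sec:SLSE} to showing that~$\widetilde\Gamma$ is finitely presented, that $H_2(\widetilde\Gamma;\R)\cong 0$, and that $H_1(\widetilde\Gamma;\Z)$ is finite (so that a fixed power lands in the commutator subgroup, which is where the constant~$K$ comes from). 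Your proposal treats this as routine, but it is the structural heart of the construction; without it Theorem~\ref{thm:simvolscl} does not apply.

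\textbf{Properties (1) and (2).} Here your argument coincides with the paper's (Proposition~\ref{prop:antrans}): Niven plus the Gelfond--Schneider dichotomy.

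\textbf{Property (3): a different route.} Both you and the paper reduce via Baker's theorem to $\Q$-linear independence of the logarithms~$\log z_{p-2}$ (equivalently, multiplicative independence of the~$z_{p-2}$ up to the $2\pi i$ ambiguity, which the paper disposes of by an algebraicity argument). From there the approaches differ. The paper argues inside the field tower~$\Q[i,\sqrt{2^{p_1}-1},\dots,\sqrt{2^{p_{k-1}}-1}]$: since the Mersenne numbers~$2^p-1$ are pairwise coprime for distinct primes~$p$, Besicovitch's theorem keeps~$\sqrt{2^{p_k}-1}$ out of this field, and a binomial expansion plus Niven shows that no power~$\gamma_{p_k}^n$ can lie there either. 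Your proposed route via prime-ideal factorisation in~$\Q(\sqrt{-(2^p-1)})$ is plausible in principle, but you have not carried it out, and the norms involved are pure powers of~$2$ (indeed $N(2^{p-1}z_{p-2})=2^{2p-2}$), so the ideal-theoretic separation across different~$p$ is less immediate than you suggest; you would need to pass to the compositum and track the splitting of~$2$ there. The paper's Besicovitch/field-degree argument sidesteps this entirely.
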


The simplicial volumes constructed in
Theorem~\ref{thm:transsimvol_explicit} will be based on our previous
work~\cite{heuerloeh4mfd} that allows us to construct $4$-manifolds
with simplicial volumes prescribed in terms of the stable commutator
length of certain finitely presented groups. See Calegari's
book~\cite{Calegari} for background on stable commutator length.

\begin{thm}[\protect{\cite[Theorem~F]{heuerloeh4mfd}}]\label{thm:simvolscl}
  Let $\Gamma$ be a finitely presented group that
  satisfies~$H_2(\Gamma;\R) \cong 0$ and let $g \in [\Gamma,\Gamma]$
  be an element in the commutator subgroup. Then there exists an oriented
  closed connected $4$-manifold~$M_g$ with
  \[ \| M_g \| = 48 \cdot \scl_\Gamma g.
  \]
\end{thm}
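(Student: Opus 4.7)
The plan is to realise $\scl_\Gamma g$ topologically as a simplicial volume by constructing an explicit closed oriented $4$-manifold $M_g$ from an efficient admissible surface for $g$. The starting point is Calegari's geometric characterisation
\[
  \scl_\Gamma g \;=\; \inf_{(S, f, n)} \frac{-\chi(S)}{2n},
\]
where $S$ ranges over compact oriented surfaces with no sphere or disk components and $f \col (S, \partial S) \to (K(\Gamma, 1), g)$ is admissible of boundary degree $n \ge 1$. The proof then splits into a construction of $M_g$, an upper bound $\|M_g\| \le 48 \cdot \scl_\Gamma g$, and a matching lower bound.

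For the construction, I would fix a finite presentation of $\Gamma$ and thicken its presentation $2$-complex in $\R^4$ to a compact oriented $4$-handlebody $N$ with $\pi_1(N) \cong \Gamma$. Given $\varepsilon > 0$, choose an admissible $(S, f, n)$ with $-\chi(S)/(2n) < \scl_\Gamma g + \varepsilon$; embed $S$ into $N$ so that $\partial S$ represents $g^n$ in $\pi_1(\partial N)$, attach a $2$-handle along $g^n$, and cap off the remaining boundary with $3$- and $4$-handles to produce a closed oriented $4$-manifold, which after connect-summing with copies of $S^2 \times S^2$ (of vanishing simplicial volume) can be assumed connected. The hypothesis $H_2(\Gamma; \R) \cong 0$ enters crucially here: it guarantees that any $2$-cycle appearing in the cap-off is rationally trivial in $K(\Gamma, 1)$, so that the $3$- and $4$-handles can be arranged not to contribute additional simplicial volume.

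For the upper bound, I would thicken an efficient triangulation of $S$ through the $4$-dimensional handle structure of $M_g$ to build a fundamental cycle with controllable $\ell^1$-norm; a careful accounting, in the spirit of Bucher's formula for products of surfaces and of the additivity of simplicial volume under gluings along amenable boundary subgroups, yields $\|M_g\| \le 48 \cdot (-\chi(S))/(2n) + o(1)$, whence $\|M_g\| \le 48 \cdot \scl_\Gamma g$ after optimising over $(S, f, n)$. For the matching lower bound I would invoke Gromov's duality
\[
  \|M_g\| = \sup\bigl\{\langle \omega, [M_g]\rangle : \omega \in H_b^4(M_g; \R),\ \|\omega\|_\infty \le 1\bigr\}
\]
together with Bavard duality: any extremal homogeneous quasimorphism $\phi \col \Gamma \to \R$ with defect $D(\phi) = 1$ and $\phi(g) = 2 \scl_\Gamma g$ gives a bounded class $[\delta \phi] \in H_b^2(\Gamma; \R)$, which, pulled back to $M_g$ via the classifying map and cup-producted with a bounded $2$-class dual to the attached $2$-handle, yields a bounded $4$-cocycle of norm at most $1$ evaluating to $48 \cdot \scl_\Gamma g$ on $[M_g]$. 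The hard part will be calibrating the construction so that the upper and lower bounds meet exactly at the constant $48$, rather than only up to a universal factor; this precision is precisely what forces the hypothesis $H_2(\Gamma; \R) \cong 0$, used to eliminate the spurious $2$-cycles that would otherwise be introduced by the cap-off handles.
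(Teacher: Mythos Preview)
This statement is not proved in the present paper; it is quoted verbatim from~\cite[Theorem~F]{heuerloeh4mfd} and used as a black box. So there is no in-paper proof to compare against, only the original argument in~\cite{heuerloeh4mfd}. Measured against that, your proposal has a genuine structural gap.

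Your construction of~$M_g$ depends on a choice of nearly-extremal admissible surface~$(S,f,n)$: you first fix~$\varepsilon>0$, then pick~$(S,f,n)$ with $-\chi(S)/(2n) < \scl_\Gamma g + \varepsilon$, and only then build the manifold. Thus you are really producing a family~$(M_\varepsilon)_\varepsilon$, and your two bounds give at best
\[
  48 \cdot \scl_\Gamma g \;\le\; \|M_\varepsilon\| \;\le\; 48 \cdot \bigl(\scl_\Gamma g + \varepsilon\bigr) + o(1).
\]
This does \emph{not} yield a single manifold with exact equality unless the infimum defining~$\scl_\Gamma g$ is attained by some admissible surface, which need not happen (and is exactly the situation of interest here, since the values of~$\scl$ you want to realise are irrational). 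The theorem asserts one manifold~$M_g$ with $\|M_g\| = 48\cdot \scl_\Gamma g$ on the nose, and your scheme cannot produce that.

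The approach in~\cite{heuerloeh4mfd} is different in kind: the manifold~$M_g$ is built once and for all from the finite presentation of~$\Gamma$ and the word~$g$, \emph{not} from an efficient surface. One attaches a $2$-cell along~$g$ to a presentation $2$-complex for~$\Gamma$; the hypothesis~$H_2(\Gamma;\R)\cong 0$ forces the resulting $H_2$ to be generated by the class of that new cell, and its $\ell^1$-semi-norm equals the filling norm of~$g$, which is~$4\cdot\scl_\Gamma g$. Passing to a closed $4$-manifold (via thickening and doubling) converts this $\ell^1$-norm into simplicial volume with a uniform multiplicative constant, yielding the exact factor~$48$. The optimisation over surfaces happens implicitly inside the $\ell^1$-semi-norm, not in the choice of manifold. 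Your Bavard-duality and bounded-cohomology ingredients do appear in that argument, but at the level of homology classes, not in a handle-by-handle cap-off; in particular there is no need for the unspecified ``bounded $2$-class dual to the attached $2$-handle'' whose existence and norm you did not justify.
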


As input for this theorem, we use the following group (whose properties 
are established in Section~\ref{sec:SLSE}):

\begin{theorem}\label{theorem:SLSEnew}
  The central extension~$\widetilde \Gamma$ of~$\SLS$ corresponding
  to the integral Euler class of~$\SLS$ is finitely presented. Moreover,
  $H_1(\widetilde \Gamma;\Z)$ is finite and~$H_2(\widetilde \Gamma ;\R) \cong 0$.
\end{theorem}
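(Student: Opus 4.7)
The plan is to combine Bass--Serre theory for $\SLS$ with the Lyndon--Hochschild--Serre spectral sequence and the five-term exact sequence of the central extension $1 \to \Z \to \widetilde\Gamma \to \SLS \to 1$ defined by the integral Euler class. For finite presentation, I would use Serre's decomposition
\[
  \SLS \cong \SL_2(\Z) *_{\Gamma_0(2)} \SL_2(\Z)
\]
coming from the action on the Bruhat--Tits tree of $\SL_2(\Q_2)$. Since $\SL_2(\Z)$ and its finite-index subgroup $\Gamma_0(2)$ are virtually free of finite rank and therefore finitely presented, $\SLS$ is finitely presented as an amalgam over a finitely presented subgroup. A central $\Z$-extension of a finitely presented group $\langle S \mid R\rangle$ is again finitely presented by adjoining a generator $t$, imposing $[s,t]=1$ for all $s\in S$, and replacing each $r\in R$ with $r\cdot t^{-n(r)}$, where $n(r)\in\Z$ is the value on $r$ of a fixed $2$-cocycle representing the Euler class. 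Hence $\widetilde\Gamma$ is finitely presented.

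Next I would pin down the low-dimensional rational homology of $\SLS$ via the Mayer--Vietoris sequence of the amalgamation. Virtual freeness gives $H_n(\SL_2(\Z);\R)=H_n(\Gamma_0(2);\R)=0$ for $n\geq 2$, and the finite abelianization of $\SL_2(\Z)$ together with the (one-dimensional) rational abelianization of $\Gamma_0(2)$ force $H_1(\SLS;\R)=0$ and identify $H_2(\SLS;\R)\cong\R$ via the Mayer--Vietoris boundary $H_2(\SLS;\R)\hookrightarrow H_1(\Gamma_0(2);\R)$. In particular, $H_1(\SLS;\Z)$ is finite.

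For $H_2(\widetilde\Gamma;\R)=0$, I would run the Lyndon--Hochschild--Serre spectral sequence
\[
  E^2_{p,q}=H_p\bigl(\SLS;H_q(\Z;\R)\bigr)\Longrightarrow H_{p+q}(\widetilde\Gamma;\R).
\]
Only the rows $q\in\{0,1\}$ are nontrivial, and $E^2_{1,1}=H_1(\SLS;\R)=0$, so $H_2(\widetilde\Gamma;\R)=E^\infty_{2,0}=\ker d^2$ for the transgression $d^2\col H_2(\SLS;\R)\to H_0(\SLS;H_1(\Z;\R))\cong\R$. This differential is precisely evaluation against the extension class, namely the real Euler class $e\in H^2(\SLS;\R)$. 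Since both source and target are one-dimensional and $e$ is nonzero, $d^2$ is an isomorphism and $H_2(\widetilde\Gamma;\R)=0$. For finiteness of $H_1(\widetilde\Gamma;\Z)$, the integral five-term sequence
\[
  H_2(\SLS;\Z)\xrightarrow{\ \tau\ }\Z\longrightarrow H_1(\widetilde\Gamma;\Z)\longrightarrow H_1(\SLS;\Z)\longrightarrow 0
\]
has finite right-hand term by the preceding computation, while $\tau$ is rationally nonzero by the same Euler-class calculation, so its cokernel in $\Z$ is finite. Hence $H_1(\widetilde\Gamma;\Z)$ sits in an extension of finite groups and is finite.

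The main obstacle will be the precise Mayer--Vietoris computation of $H_2(\SLS;\R)\cong\R$ together with the verification that the integral Euler class evaluates nontrivially on its generator. The natural candidate generator is the Mayer--Vietoris boundary of a hyperbolic class in $H_1(\Gamma_0(2);\R)$, and the cleanest way to pair it with $e$ is via an explicit rotation-number cocycle for the $\SL_2(\R)$-action on $S^1$, in the spirit of the computations of Calegari referenced earlier in the paper; once this pairing is established, everything else falls out of the spectral sequence and the five-term sequence formally.
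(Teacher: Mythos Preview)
Your outline is correct and follows essentially the same skeleton as the paper: the amalgam decomposition $\SLS \cong \SL_2(\Z) *_{\Gamma_0(2)} \SL_2(\Z)$ for finite presentation, the Mayer--Vietoris/low-degree homology input for $\SLS$, and the five-term (resp.\ Eckmann--Hilton--Stammbach six-term) exact sequence of the central extension for the vanishing of $H_1(\widetilde\Gamma;\R)$ and $H_2(\widetilde\Gamma;\R)$.

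The one place you diverge from the paper is precisely the step you flag as the ``main obstacle,'' namely the nontriviality of the real Euler class $e \in H^2(\SLS;\R)$. You propose to verify this by an explicit Mayer--Vietoris/rotation-number pairing. The paper sidesteps this computation entirely via bounded cohomology: since $\SLS$ admits no nontrivial quasi-morphisms, the comparison map $c_\Gamma \colon H^2_b(\SLS;\R) \to H^2(\SLS;\R)$ is injective; and since the bounded Euler class generates $H^2_b(\SLS;\R) \cong \R$ (Burger--Monod), its image $e$ is nonzero. This yields the nontriviality of the evaluation $\langle e,\args\rangle$ on $H_2(\SLS;\Z)$ immediately from the universal coefficient theorem, with no hands-on cocycle work. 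Your route would also succeed, but the paper's bounded-cohomology trick is shorter and avoids the explicit pairing you anticipate having to carry out.
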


It is known that the image of stable commutator length of
the central Euler class extension of~$\SLS$ contains arbitrarily small
transcendental numbers~\cite[Example~5.38]{Calegari}: 

\begin{exmp}\label{exa:sclrot}
  Let $\Gamma := \SLS$ and let $\widetilde \Gamma$ denote the central
  extension of~$\SLS$ corresponding to the integral Euler class
  of~$\SLS$. In other words, $\widetilde \Gamma$ is the pre-image
  of~$\SLS$ under the canonical projection~$\widetilde\SL_2(\R)
  \longrightarrow \SL_2(\R)$, where $\widetilde\SL_2(\R)$ denotes the
  universal covering group of~$\SL_2(\R)$.
  Then
  \[ \scl_{\widetilde \Gamma} (\widetilde g) = \frac{\lvert\rot( \widetilde g)|}2
  \]
  for all~$\widetilde g \in \widetilde \Gamma$,
  where $\rot \col \widetilde \Gamma \to \R_{\geq 0}$
  denotes the rotation number~\cite[Example~5.38]{Calegari}. 
  
  Furthermore, for each~$g \in \Gamma$ with~$\lvert\tr(g)| \leq 2$,
  there is a lift $\widetilde{g} \in \widetilde{\Gamma}$ of $g$ such
  that~\cite[p.~145]{Calegari} 
  \[ \rot(\widetilde g) = \frac{\arccos (\tr g /2)}{\pi}.
  \]
  For~$n \in \N_{>0}$, we consider 
  \[ g_n :=
     \begin{pmatrix}
        2 & 1 + 2^{-n+1} \\
       -1 & -2^{-n}
     \end{pmatrix}
     \in \Gamma
  \]
  and let $\widetilde g_n \in \widetilde \Gamma$ be the associated lift. 
  Then $\lim_{n \rightarrow \infty} \rot(\widetilde g_n) = 0$ and 
  \[   \scl_{\widetilde \Gamma}(\widetilde g_n)
  = \frac{\lvert\rot(\widetilde g_n)|}2
  = \frac{\arccos(\tr g_n / 2)}{2 \cdot \pi}
  = \frac{\arccos(1 - 2^{-n-1})}{2 \cdot \pi}
  = \frac{\alpha_n}{48}.
  \]
  However, a priori, it is not clear that $\widetilde g_n$ lies in the
  commutator subgroup of~$\widetilde \Gamma$.  Because $K :=
  |H_1(\widetilde \Gamma;\Z)|$ is finite
  (Theorem~\ref{theorem:SLSEnew}), we know that $h_n := \widetilde g_n{}^K
  \in [\widetilde \Gamma,\widetilde \Gamma]$ for all~$n \in \N$. Moreover,
  by construction, 
  \[ \scl_{\widetilde \Gamma} (h_n)
  = K \cdot \scl_{\widetilde \Gamma} (\widetilde g_n)
  = K \cdot \frac{\alpha_n}{48}.
  \]
\end{exmp}

With these ingredients, we can complete the proof of
Theorem~\ref{thm:transsimvol_explicit} (and thus of
Theorem~\ref{theorem:transsimvol}):

\begin{proof}[Proof of Theorem~\ref{thm:transsimvol_explicit}/\ref{theorem:transsimvol}] 
Let $\widetilde \Gamma$ be the
central Euler class extension of~$\SLS$ and let $(h_n)_{n\in \N}$ and~$K$ be as in
Example~\ref{exa:sclrot}.  Applying
Theorem~\ref{thm:simvolscl} to~$h_n \in [\widetilde \Gamma,
  \widetilde \Gamma]$ results in an oriented closed connected
$4$-manifold~$M_n$ with~$\|M_n\| = K \cdot \alpha_n$. Hence, $\lim_{n
  \rightarrow \infty} \|M_n\| = K \cdot 24 \cdot \arccos(1) /\pi = 0$.  If
$n>0$, then $\alpha_n$ is known to be transcendental
(Proposition~\ref{prop:antrans}). Moreover, Baker's theorem proves the
last part of Theorem~\ref{thm:transsimvol_explicit}
(Proposition~\ref{prop:anindep}).
\end{proof}

\subsection*{Proof of Theorem~\ref{theorem:nonsimvol}}

The proof of Theorem~\ref{theorem:nonsimvol} relies on the
following simple observation (proved in Section~\ref{sec:simvolrc},
where also the definition of right-computability is recalled):

\begin{theorem}\label{theorem:simvolrc}
  Let $M$ be an oriented closed connected manifold. Then $\| M \|$
  is a right-computable real number.
\end{theorem}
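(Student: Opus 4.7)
The plan is to exhibit an effectively computable sequence of rationals decreasing to~$\|M\|$; equivalently, to show that the set $\{q\in\Q : q>\|M\|\}$ is recursively enumerable. The strategy is to approximate~$\|M\|$ from above by $\ell^1$-norms of rational simplicial fundamental cycles on finer and finer subdivisions of a fixed finite simplicial model of~$M$.

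First I would fix a finite triangulation~$T$ of~$M$; in the contexts of interest here the manifold is triangulable, and more generally one can pass to a finite simplicial complex $X \simeq M$ realising the fundamental class. For each $n\in\N$, let $T^{(n)}$ denote the $n$-fold barycentric subdivision. The simplicial chain complex $C^\Delta_*(T^{(n)};\Q)$ embeds into the singular chain complex of~$M$, and the simplicial $\ell^1$-norm on $T^{(n)}$ coincides with the restricted singular $\ell^1$-norm. Fixing an integral simplicial fundamental cycle $c_0 \in Z^\Delta_d(T;\Z)$ and its subdivisions $c_0^{(n)}$, the rational fundamental cycles on~$T^{(n)}$ are precisely the elements of the rational affine subspace $c_0^{(n)} + B^\Delta_d(T^{(n)};\Q)$.

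For each~$n$, this subspace sits inside a finite-dimensional rational vector space, so enumerating its elements and computing their $\ell^1$-norms reduces to rational linear algebra and is therefore effective. Ranging over $n\in\N$ and over all rational fundamental cycles on~$T^{(n)}$ yields a recursively enumerable family of rationals, each of which is an upper bound for~$\|M\|$ via the norm-preserving embedding into the singular chain complex. Standard bookkeeping (taking running minima of an enumeration) converts any such effectively enumerable family of upper bounds into a computable monotone decreasing sequence.

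The remaining---and main---point is to verify that the infimum of the enumerated bounds actually equals~$\|M\|$. This rests on two approximation statements: (a)~on each fixed~$T^{(n)}$, the rational fundamental cycles are $\ell^1$-dense among the real ones, which is elementary in a finite-dimensional space defined by integer linear equations; and (b)~the infima of simplicial $\ell^1$-norms on $T^{(n)}$ converge to the singular simplicial volume~$\|M\|$ as $n\to\infty$. Statement~(b) is the substantive input and is the step I expect to require the most care: any singular fundamental cycle of small $\ell^1$-norm must, after sufficient barycentric refinement, be $\ell^1$-approximable by a simplicial cycle representing the same class, so that arbitrarily efficient singular cycles are witnessed at some finite stage of the enumeration. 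This is the place where the classical agreement of singular and refined simplicial $\ell^1$-seminorms enters; once it is established, the right-computability of~$\|M\|$ follows immediately from the effective enumeration above.
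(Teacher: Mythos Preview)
There is a genuine gap in step~(b). If $T$ is a triangulation of the closed $d$-manifold~$M$, then $C^\Delta_{d+1}(T^{(n)}) = 0$ for every~$n$, so $B^\Delta_d(T^{(n)};\Q) = 0$. Hence the \emph{only} rational simplicial fundamental cycle on~$T^{(n)}$ is $c_0^{(n)}$ itself, and its $\ell^1$-norm equals the number of $d$-simplices of~$T^{(n)}$. This number grows under barycentric subdivision, so your enumeration produces the simplex-counts of $T, T^{(1)}, T^{(2)}, \dots$, whose infimum is the number of $d$-simplices of~$T$ --- in general strictly larger than~$\|M\|$ (for $M=S^2$ you get at least~$4$, while $\|S^2\|=0$). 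Even if you pass to a complex~$X\simeq M$ with cells in dimension~$d+1$, there is no ``classical agreement'' of simplicial and singular $\ell^1$-seminorms of the kind you invoke; such an agreement is known to fail.

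The issue is that simplicial approximation of a singular simplex~$\sigma\colon\Delta^d\to|T|$ does not produce a simplex of~$T^{(n)}$; it produces a simplicial \emph{map} from a subdivision of~$\Delta^d$ to a subdivision of~$T$, which, viewed as a singular simplex, may traverse many simplices of~$T^{(n)}$ with multiplicity. The paper's proof exploits exactly this: rather than simplicial chains on~$T^{(n)}$, it enumerates \emph{combinatorial singular chains} --- $\Z$-linear combinations of simplicial maps from iterated subdivisions of~$\Delta^d$ to iterated subdivisions of~$T$. Simplicial approximation then shows that every singular fundamental cycle is $\ell^1$-approximated, without norm increase, by a combinatorial one. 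One can algorithmically test whether such a combinatorial cycle represents~$m\cdot[T]_\Z$ (by comparison in simplicial homology), which yields recursive enumerability of $\{(m,n):\|m\cdot[T]_\Z\|_{1,\Z}\le n\}$ and hence right-computability of $\|M\|=\inf_{m>0}\|m\cdot[T]_\Z\|_{1,\Z}/m$. Replacing your simplicial chains on~$T^{(n)}$ by these combinatorial singular chains would repair your argument along the same lines.
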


In contrast, the numbers~$\alpha$ in
Theorem~\ref{theorem:nonsimvol} are \emph{not}
right-computable~(Proposition \ref{prop:not right computable}) and thus, in particular, \emph{not}
algebraic, because every algebraic number is
computable~\cite[Section~6]{eisermann}. The product of a
computable number with a number that is not right-computable is also
not right-computable (Section~\ref{subsec:rc}). Therefore,
applying Theorem~\ref{theorem:simvolrc} proves
Theorem~\ref{theorem:nonsimvol}.

\subsection*{Organisation of this article}

In Section~\ref{sec:trans}, we prove the transcendence properties of
the $\arccos$-terms.  In Section~\ref{sec:SLSE}, we solve the
group-theoretic problem for the proof of
Theorem~\ref{theorem:SLSEnew}. In Section~\ref{sec:simvolrc}, we prove
Theorem~\ref{theorem:simvolrc}.

\subsection*{Acknowledgements}

We would like to thank the anonymous referee for asking questions
about the Euler extension (which lead to
a simplification of the treatment of Theorem~\ref{theorem:SLSEnew}). 

\section{Some transcendental numbers}\label{sec:trans}

In this section, for~$n \in \N_{\geq 0}$, we will investigate the
transcendence of the following real numbers
\[ \alpha_{n} := \frac{24 \cdot \arccos(1 - 2^{-n-1})}{\pi}.
\] 
We will see that $\alpha_0 = 8$ and that $\alpha_n$ is transcendental (over the algebraic numbers) for every $n \geq 1$.  
\subsection{Transcendence}

As a first step, we show that the~$\alpha_n$ are transcendental for $n \geq 1$, using
Niven's theorem.

\begin{thm}[Niven~\protect{\cite[Corollary~3.12]{niven}}]\label{thm:Niven}
  Let $\trig \in \{\sin,\cos\}$ and let $x \in \Q$ with~$\trig(\pi \cdot x) \in \Q$.
  Then $\trig(\pi \cdot x) \in \{0,\pm 1/2,\pm1\}$.
\end{thm}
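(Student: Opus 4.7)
The plan is to reduce the sine case to the cosine case via the co-function identity $\sin(\pi x) = \cos\bigl(\pi(1/2 - x)\bigr)$: if $x \in \Q$, then $1/2 - x \in \Q$ as well, so the cosine version of the statement applied to $1/2 - x$ yields the sine version. It therefore suffices to handle cosine. Given $x \in \Q$ with $\cos(\pi x) \in \Q$, I would write $x = p/q$ with $p \in \Z$ and $q \in \N_{>0}$, set $y := 2 \cos(\pi x)$, and aim to show that $y$ is an algebraic integer; since a rational algebraic integer lies in~$\Z$ and $|y| \leq 2$, this forces $y \in \{0, \pm 1, \pm 2\}$, i.e.\ $\cos(\pi x) \in \{0, \pm 1/2, \pm 1\}$.

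For the algebraic integrality of~$y$, I would consider the sequence $a_n := 2 \cos(n \pi x)$, with $a_0 = 2$ and $a_1 = y$. The product-to-sum identity $2\cos\alpha \cdot 2\cos\beta = 2\cos(\alpha + \beta) + 2\cos(\alpha - \beta)$ yields the recurrence $a_{n+1} = y \cdot a_n - a_{n-1}$, and an immediate induction on~$n$ produces a monic polynomial $P_n \in \Z[T]$ (essentially a normalised Chebyshev polynomial) with $a_n = P_n(y)$. Evaluating at $n = q$ gives $P_q(y) = 2 \cos(p \pi) = \pm 2$, so $y$ is a root of the monic integer-coefficient polynomial $P_q(T) \mp 2 \in \Z[T]$, as required.

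The only conceptual step is the choice of normalisation: the factor~$2$ in $a_n := 2\cos(n\pi x)$ is what makes the induced recurrence both monic and integer-valued at the same time. Everything else --- the reduction from sine to cosine, the Chebyshev-type induction, and the principle that a rational algebraic integer is an integer --- is entirely routine. I do not expect any genuine obstacle; Niven's theorem is an elementary consequence of this normalisation trick.
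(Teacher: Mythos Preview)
Your argument is correct and is essentially the standard elementary proof of Niven's theorem via Chebyshev-type recursion and the fact that rational algebraic integers lie in~$\Z$. Note, however, that the paper does not supply its own proof of this statement: Theorem~\ref{thm:Niven} is simply quoted from Niven's book~\cite[Corollary~3.12]{niven} with a citation and no argument, so there is no ``paper's proof'' to compare against. Your proposal therefore goes beyond what the paper does here; it is a clean self-contained verification of the cited fact, and nothing in it is problematic.
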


\begin{prop}\label{prop:antrans}
  For every $n \geq 1$, the number~$\alpha_n$ is transcendental over~$\Q$.
\end{prop}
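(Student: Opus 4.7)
My plan is to argue by contradiction. Suppose that for some $n \geq 1$, the number $\alpha_n$ is algebraic. Since $24$ is algebraic, the quotient
\[
q := \frac{\alpha_n}{24} = \frac{\arccos(1 - 2^{-n-1})}{\pi}
\]
would then be algebraic as well, and by construction it satisfies $\cos(\pi q) = 1 - 2^{-n-1}$. The key preliminary observation is that for every $n \geq 1$ the rational number $1 - 2^{-n-1}$ lies in the interval $[3/4, 1)$, and in particular does \emph{not} belong to the exceptional set $\{0, \pm 1/2, \pm 1\}$ appearing in Niven's theorem.

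I would then split the argument into two cases according to whether $q$ is rational. If $q$ is rational, Theorem~\ref{thm:Niven} applies directly and forces $\cos(\pi q) \in \{0, \pm 1/2, \pm 1\}$, contradicting the observation above. If $q$ is algebraic but irrational, the plan is to invoke the Gelfond--Schneider theorem with base $-1$ and exponent $q$: this yields that $(-1)^q = e^{i\pi q}$ is transcendental. On the other hand,
\[
e^{i\pi q} = \cos(\pi q) + i\sin(\pi q) = (1 - 2^{-n-1}) + i \sqrt{1 - (1 - 2^{-n-1})^2}
\]
has rational real part and algebraic (indeed at most quadratic) imaginary part, hence is itself algebraic --- a contradiction. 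Combining the two cases shows that $\alpha_n$ cannot be algebraic.

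The main obstacle is really the algebraic-irrational case, since Theorem~\ref{thm:Niven} as stated only speaks to rational arguments; handling algebraic $q$ requires a genuine transcendence input beyond Niven. A cleaner way to package the argument is to quote the strengthened ``algebraic Niven theorem'' (a standard consequence of Gelfond--Schneider): if $q$ is algebraic and $\cos(\pi q)$ is algebraic, then $q$ is rational. Combined with Theorem~\ref{thm:Niven}, this single fact dispatches both cases at once and delivers the transcendence of $\alpha_n$.
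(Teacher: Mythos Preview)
Your argument is correct and follows essentially the same route as the paper: first use Gelfond--Schneider to reduce to the dichotomy ``$\arccos(1-2^{-n-1})/\pi$ is rational or transcendental,'' then eliminate the rational case via Niven's theorem and the observation that $1-2^{-n-1}\notin\{0,\pm 1/2,\pm 1\}$ for $n\geq 1$. The only cosmetic difference is that the paper quotes the rational/transcendental dichotomy for $\arccos(x)/\pi$ with algebraic~$x$ as a known consequence of Gelfond--Schneider, whereas you derive it directly by applying Gelfond--Schneider to $(-1)^q$; your final paragraph's ``algebraic Niven theorem'' is exactly the statement the paper cites.
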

\begin{proof}
A consequence of the Gelfond-Schneider theorem~\cite[Theorem~1]{lima} says that for any real algebraic number $x$, the expression $\arccos(x)/\pi$ is either rational or transcendental. Thus $\alpha_n$ is either rational or transcendental. 
  \emph{Assume} for a contradiction that $\alpha_n$ were rational. Then,
  because $\cos(\pi/24 \cdot \alpha_n) = 1 - 2^{-n-1}$ is also rational, by
  Niven's theorem (Theorem~\ref{thm:Niven}), we obtain
  \[  1 - \frac1{2^{n+1}} = \cos \Bigl(\frac\pi{24} \cdot \alpha_n\Bigr) \in \{0,\pm 1/2, \pm 1\}.
  \]
  However, this contradicts the hypothesis that $n \geq 1$. 
  Hence, $\alpha_n$ must be trans\-cendental.
\end{proof}

\subsection{Linear independence over the algebraic numbers}

We will now refine Proposition~\ref{prop:antrans}, using Baker's theorem. 

\begin{thm}[Baker~\cite{baker}] \label{thm:baker}
  Let $\Lambda \subset \{ \ln (\alpha) \in \mathbb{C} \mid \alpha \text{
    algebraic over~$\Q$}\}$ be linearly independent over~$\Q$. Then
  $\Lambda$ is linearly independent over the field of algebraic
  numbers.
\end{thm}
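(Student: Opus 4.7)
The plan is to follow Baker's original 1966 argument, a quantitative refinement of Gelfond's method that extends from a single logarithm to several. This theorem is one of the landmarks of twentieth-century transcendence theory; in a paper focussed on simplicial volume one would naturally cite it rather than re-derive it, but the skeleton of a proof can be summarised as follows.

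Assume, for contradiction, that there is a nontrivial algebraic dependence
\[
\beta_0 + \beta_1 \lambda_1 + \cdots + \beta_n \lambda_n = 0
\]
with $\beta_i$ algebraic (not all zero) and $\lambda_i = \ln \alpha_i \in \Lambda$. Solving for one $\lambda_i$ in terms of the others yields an ``extra'' algebraic relation among the $\alpha_i$. Using Siegel's lemma (a pigeonhole argument over number fields), I would then construct a nontrivial auxiliary function
\[
\Phi(z) := \sum_{l_1,\dots,l_{n-1}} p(l_1,\dots,l_{n-1}) \, \alpha_1^{l_1 z} \cdots \alpha_{n-1}^{l_{n-1} z} \, \alpha_n^{L(l_1,\dots,l_{n-1})\, z},
\]
where $L$ is the linear form dictated by the assumed dependence and the integer coefficients $p$ are of controlled height; the construction is arranged so that $\Phi$ and many of its derivatives vanish at many positive integers.

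The crux is then an extrapolation step. Combining the Schwarz lemma in several complex variables with an effective zero-estimate and a Liouville-type size inequality for nonzero algebraic numbers, one shows that $\Phi$ must in fact vanish on a much larger set of integers than was originally built in. Iterating the extrapolation forces $\Phi$ eventually to take a nonzero algebraic value that is simultaneously too small (by the analytic growth bound) and too large (by Liouville); the resulting contradiction completes the proof.

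The hard part is precisely this multivariate extrapolation: the quantitative interplay between analytic growth, arithmetic heights, and the inductive combinatorics on the number of logarithms is delicate, and is exactly where Baker's innovation went beyond the classical Gelfond--Schneider framework. It is this difficulty that justifies treating Baker's theorem as a black box in the present article.
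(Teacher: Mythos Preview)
The paper gives no proof of this statement at all: Theorem~\ref{thm:baker} is stated with a citation to Baker~\cite{baker} and used as a black box in the proof of Proposition~\ref{prop:anindep}. You correctly anticipated this, and your high-level sketch of Baker's method (auxiliary function via Siegel's lemma, extrapolation via Schwarz-type estimates, contradiction via a Liouville inequality) is a fair summary of the original argument. One small remark: the statement as quoted here is the \emph{homogeneous} form, so in your assumed dependence there should be no constant term~$\beta_0$; including it corresponds to Baker's stronger inhomogeneous theorem, which of course also holds but is not what is being invoked.
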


\begin{prop}\label{prop:anindep}
  Let $\Prim \subset \N$ be the set of prime numbers. Then the
  sequence~$(\alpha_{p-2})_{p \in \Prim}$ is linearly independent over
  the algebraic numbers. 
\end{prop}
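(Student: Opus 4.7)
The plan is to reformulate the statement as a question about logarithms of algebraic numbers and then invoke Baker's theorem (Theorem~\ref{thm:baker}). I would introduce $\theta_n := \arccos(1 - 2^{-n-1})$ and
\[ z_n := e^{i\theta_n} = (1 - 2^{-n-1}) + i \cdot 2^{-n-1} \sqrt{2^{n+2}-1}, \]
an algebraic number on the unit circle, lying in the imaginary quadratic field $K_n := \Q(\sqrt{-(2^{n+2}-1)})$, with principal-branch logarithm $\ln(z_n) = i\theta_n$. Because $\alpha_n = -24i \cdot \ln(z_n)/\pi$ differs from $\ln(z_n)$ only by the fixed nonzero factor $-24i/\pi$, the $\overline{\Q}$-linear independence of $(\alpha_{p-2})_{p \in \Prim}$ is equivalent to the $\overline{\Q}$-linear independence of the family $(\ln(z_{p-2}))_{p \in \Prim}$. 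By Baker's theorem, it then suffices to establish $\Q$-linear independence of these logarithms.

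To do so, I would assume for contradiction a nontrivial finitely-supported integer relation $\sum_p m_p \ln(z_{p-2}) = 0$ and exponentiate it to the multiplicative identity $\prod_p z_{p-2}^{m_p} = 1$ inside the compositum $L$ of the relevant fields $K_{p-2}$. The heart of the argument will be a Galois step: from $\gcd(2^p - 1, 2^q - 1) = 2^{\gcd(p,q)} - 1 = 1$ for distinct primes $p, q$, together with the elementary observation that no $2^p - 1$ for prime $p \geq 2$ is a square in $\Q$ (an easy $\bmod 4$ check), standard Kummer theory implies that the quadratic fields $K_{p-2}$ are pairwise linearly disjoint over $\Q$. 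Hence restriction gives a canonical isomorphism $\mathrm{Gal}(L/\Q) \cong \prod_p \mathrm{Gal}(K_{p-2}/\Q)$, and for each $q$ in the support I could pick $\sigma_q \in \mathrm{Gal}(L/\Q)$ acting as complex conjugation on $K_{q-2}$ and trivially on $K_{p-2}$ for $p \neq q$. Applying $\sigma_q$ to the multiplicative relation and dividing by the original would yield $z_{q-2}^{-2m_q} = 1$, using $\sigma_q(z_{q-2}) = \overline{z_{q-2}} = z_{q-2}^{-1}$. For any $q \geq 3$, Proposition~\ref{prop:antrans} ensures that $\theta_{q-2}/\pi$ is irrational, so $z_{q-2}$ is not a root of unity, forcing $m_q = 0$.

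After eliminating all odd primes, the relation would collapse to $m_2 \cdot \ln(z_0) = m_2 \cdot i\pi/3 = 0$, forcing $m_2 = 0$ and giving the contradiction. The main obstacle is the linear-disjointness claim on which the Galois step rests; it reduces to the coprimality and non-squareness of Mersenne-type numbers, both of which are elementary. A secondary subtle point is that the Galois step alone cannot dispose of $m_2$, because $z_0 = e^{i\pi/3}$ is a $6$-th root of unity; it matters here that the hypothesis is the additive identity $\sum_p m_p \ln(z_{p-2}) = 0$ and not merely the weaker multiplicative identity $\prod_p z_{p-2}^{m_p} = 1$, so that the leftover scalar relation among logarithms can finish the job.
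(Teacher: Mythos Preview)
Your proposal is correct and follows the same overall architecture as the paper: rewrite $\alpha_{p-2}$ as a fixed scalar times $\ln$ of an algebraic number on the unit circle, reduce via Baker's theorem to $\Q$-linear independence of these logarithms, and then exponentiate a putative integral relation to a multiplicative identity among the algebraic numbers. The divergence is in how the multiplicative identity is ruled out. The paper works field-theoretically ``from below'': it expands $\gamma_{p_k}^n$ by the binomial theorem, uses Niven's theorem to see that the $\sqrt{2^{p_k}-1}$-coefficient is nonzero, and then invokes Besicovitch's theorem on square roots of coprime integers to conclude that $\gamma_{p_k}^n$ cannot lie in the field generated by the other $\sqrt{2^{p_j}-1}$. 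You instead work Galois-theoretically ``from above'': pick $\sigma_q\in\mathrm{Gal}(L/\Q)$ acting as complex conjugation on $K_{q-2}$ and trivially elsewhere, apply it to the relation, and divide to isolate $z_{q-2}^{-2m_q}=1$; then Proposition~\ref{prop:antrans} (no root of unity for odd $q$) kills $m_q$. Your approach is more structural and avoids the explicit power computation; the paper's approach is more hands-on and cites Besicovitch rather than Kummer theory. Your separate handling of $p=2$, where $z_0=e^{i\pi/3}$ \emph{is} a root of unity so the Galois step only yields $3\mid m_2$, and your return to the additive logarithmic relation to finish, is a nice touch.

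One wording point to tighten: ``pairwise linearly disjoint'' is not enough to get $\mathrm{Gal}(L/\Q)\cong\prod_p\mathrm{Gal}(K_{p-2}/\Q)$ (think of $\Q(\sqrt2),\Q(\sqrt3),\Q(\sqrt6)$). What you actually need, and what your ingredients do give, is that the classes of $-(2^p-1)$ are \emph{independent} in $\Q^\times/(\Q^\times)^2$: pairwise coprimality of the $2^p-1$ forces any subset product $\prod_{p\in S}(2^p-1)$ to be a square only if each factor is, which your $\bmod\,4$ observation excludes, and the sign $(-1)^{|S|}$ cannot rescue it. State it that way and the Galois isomorphism follows cleanly from Kummer theory.
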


For the prime $p=2$ we compute that $\alpha_{p-2} = \alpha_0 = \frac{24
  \arccos(1/2)}{\pi} = 8$, which is rational.  Hence, Proposition
\ref{prop:anindep} includes a proof that $\alpha_{p-2}$ is
transcendental for every odd prime~$p$.

\begin{proof}
We will use Baker's Theorem \ref{thm:baker}. Rewriting~$\arccos$ as 
$$
\arccos(z) = -i \cdot \ln\bigl(i \cdot z + \sqrt{1-z^2}\bigr),
$$
we see that
$$
\alpha_{p-2} = \frac{24 \cdot \arccos(1-2^{-p+1})}{\pi} = \frac{-24 \cdot i }{\pi} \cdot \ln(\gamma_p),
$$
where
$$
\gamma_p := i \cdot \frac{2^{p-1}-1}{2^{p-1}} + \frac{1}{2^{p-1}} \cdot \sqrt{2^p - 1}.
$$
We will show in Claim~\ref{claim:ln gamma lin indep} that for every 
finite set~$\{ p_1, \ldots, p_k \}$ of distinct primes the family~$\{
\ln(\gamma_{p_j}) \}_{j \in \{1, \ldots, k\}}$ is linearly
independent over~$\Q$. As $\alpha_{p-2}$ is a uniform rescaling of~$\ln(\gamma_p)$,
this will imply by using Baker's Theorem that this family is 
also linearly independent over the algebraic numbers.

We will show the linear independence of~$\{
\ln(\gamma_{p_j}) \}_{j \in \{1, \ldots, k\}}$
 over~$\Q$ in several steps: 

\begin{claim} 
  Let $(m_k)_{k \in \N}$ be a sequence of pairwise coprime positive
  integers.  Then, for every~$k \in \N_{\geq 2}$, we have that
  $$
  \sqrt{m_k} \not \in \Q[i, \sqrt{m_{1}}, \ldots, \sqrt{m_{k-1}}].
  $$
\end{claim}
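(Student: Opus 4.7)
The plan is to prove the claim by induction on $k$. As a preliminary reduction, one may assume without loss of generality that each $m_j$ is squarefree and at least $2$: writing $m = s^2 \cdot m^{\mathrm{sf}}$ with $m^{\mathrm{sf}}$ squarefree gives $\sqrt{m} = s\sqrt{m^{\mathrm{sf}}}$, the squarefree parts of pairwise coprime integers remain pairwise coprime, and the condition $m_j \geq 2$ (equivalently, $m_j$ is not a perfect square) must be imposed as an implicit hypothesis, since otherwise $\sqrt{m_j} \in \Q$ and the claim fails. The inductive hypothesis at rank~$k-1$ should be formulated for \emph{every} sequence of $k-1$ pairwise coprime squarefree integers~$\geq 2$, not just for prefixes of one fixed sequence, so that it may be reapplied to modified sequences in the inductive step.

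For the inductive step at rank~$k$, I would suppose for contradiction that $\sqrt{m_k}$ lies in $K_{k-1} := \Q[i, \sqrt{m_1}, \ldots, \sqrt{m_{k-1}}]$. By the induction hypothesis applied to $m_1, \ldots, m_{k-1}$, we have $\sqrt{m_{k-1}} \notin K_{k-2}$, so $\{1, \sqrt{m_{k-1}}\}$ is a basis of $K_{k-1}$ over $K_{k-2}$, and we may write $\sqrt{m_k} = a + b\sqrt{m_{k-1}}$ with $a, b \in K_{k-2}$. Squaring gives $m_k - a^2 - b^2 m_{k-1} = 2ab\sqrt{m_{k-1}}$; comparing coefficients in the basis $\{1, \sqrt{m_{k-1}}\}$ yields $ab = 0$ and $a^2 + b^2 m_{k-1} = m_k$. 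If $b = 0$, then $\sqrt{m_k} = \pm a \in K_{k-2}$, contradicting the induction hypothesis applied to the sequence $m_1, \ldots, m_{k-2}, m_k$. If $a = 0$, then $b \cdot m_{k-1} = \sqrt{m_{k-1} m_k} \in K_{k-2}$, and the sequence $m_1, \ldots, m_{k-2}, m_{k-1} m_k$ remains pairwise coprime (since $\gcd$ is multiplicative in the second argument when the first is coprime to both factors) and squarefree (since coprime squarefree integers have squarefree product), so the induction hypothesis again produces a contradiction.

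The main obstacle is formulating the inductive hypothesis in sufficient generality: the $b = 0$ case requires it for the sequence with $m_{k-1}$ replaced by $m_k$, while the $a = 0$ case requires it with $m_{k-1}$ replaced by $m_{k-1} m_k$. Both modifications preserve pairwise coprimality, squarefreeness, and the bound~$\geq 2$, so the induction closes. The base case $k = 2$ follows from the same squaring argument combined with the irrationality of $\sqrt{n}$ for squarefree $n \geq 2$: since $\sqrt{n}$ is real, $\sqrt{n} \in \Q[i]$ would force $\sqrt{n} \in \Q[i] \cap \R = \Q$, contradicting that $n$ is not a perfect square.
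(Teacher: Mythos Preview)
Your argument is correct and entirely self-contained, whereas the paper simply cites a classical result of Besicovitch on linear independence of surds. You also correctly identify a genuine defect in the claim as literally stated: without the implicit hypothesis that no~$m_j$ is a perfect square, the conclusion is false (take $m_1 = 2$, $m_2 = 9$). In the paper's only application (the next claim), the $m_j$ are Mersenne numbers~$2^{p_j} - 1$ for primes~$p_j$, and these are never perfect squares since $2^p - 1 \equiv 3 \pmod 4$ for~$p \geq 2$, so the intended application is unaffected. The advantage of your route is that it makes all hypotheses explicit, handles the adjunction of~$i$ directly, and avoids an external reference; the advantage of the paper's is brevity. Your inductive scheme---stating the hypothesis for \emph{all} pairwise coprime squarefree sequences of a given length rather than for prefixes of one fixed sequence---is exactly what is needed for the two cases $b=0$ (replace $m_{k-1}$ by~$m_k$) and $a=0$ (replace $m_{k-1}$ by~$m_{k-1}m_k$) to close, and your verification that these modified sequences remain pairwise coprime, squarefree, and~$\geq 2$ is correct.
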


\begin{proof}
  This follows from a classical result of Besicovitch~\cite{besicovitch}.
\end{proof}

\begin{claim} \label{claim: 2 power p not in field}
  Let $\{ p_1, \ldots, p_k \}$ be a finite set of distinct primes. Then 
  $$
  \sqrt{2^{p_k}-1} \not \in \Q[i, \sqrt{2^{p_{1}}-1}, \sqrt{2^{p_{2}}-1}, \ldots, \sqrt{2^{p_{k-1}} - 1}]
  $$
\end{claim}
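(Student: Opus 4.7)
The plan is to reduce this claim directly to the preceding Besicovitch-type claim, applied to the sequence $m_j := 2^{p_j} - 1$ for $j = 1, \ldots, k$. Once pairwise coprimality of the $m_j$ is verified, the previous claim gives exactly the desired non-containment.

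The key input for coprimality is the classical Mersenne gcd identity
\[
  \gcd(2^a - 1,\, 2^b - 1) \;=\; 2^{\gcd(a,b)} - 1,
\]
valid for all positive integers $a,b$. This is an elementary consequence of the Euclidean algorithm: if $a = qb + r$ with $0 \leq r < b$, then $2^b - 1$ divides $2^{qb} - 1$, so $2^a - 1 = 2^{r}(2^{qb}-1) + (2^{r}-1) \equiv 2^{r} - 1 \pmod{2^b - 1}$, giving $\gcd(2^a - 1, 2^b - 1) = \gcd(2^b - 1, 2^r - 1)$. Iterating this step mirrors the Euclidean algorithm applied to $(a,b)$ and produces the identity. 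Since $p_1, \ldots, p_k$ are pairwise distinct primes, $\gcd(p_i,p_j) = 1$ for $i \neq j$, and hence $\gcd(m_i, m_j) = 2^1 - 1 = 1$.

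With pairwise coprimality established, the previous claim applied to the finite sequence $m_1, \ldots, m_k$ of pairwise coprime positive integers yields
\[
  \sqrt{2^{p_k} - 1} \;\notin\; \Q[i, \sqrt{2^{p_1}-1}, \ldots, \sqrt{2^{p_{k-1}}-1}],
\]
which is the claim. There is essentially no obstacle beyond verifying coprimality; the substantive content lives in the preceding Besicovitch claim. As a sanity check that we are not in a degenerate situation, note that for any prime $p \geq 2$ one has $2^p - 1 \equiv 3 \pmod 4$, so $m_k$ is in particular not a rational square and $\sqrt{m_k}$ is genuinely irrational.
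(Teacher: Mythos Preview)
Your proof is correct and follows essentially the same route as the paper: verify that Mersenne numbers $2^{p}-1$ with distinct prime exponents are pairwise coprime, then invoke the preceding Besicovitch-type claim. The paper merely asserts the coprimality, whereas you supply the standard $\gcd(2^{a}-1,2^{b}-1)=2^{\gcd(a,b)}-1$ argument and add a sanity check that $2^{p}-1$ is never a square; these are welcome but purely expository additions.
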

\begin{proof}
  For all primes~$p, q \in \N$ with~$p \neq q$, the
  Mersenne numbers $2^{p}-1$ and $2^q-1$ are coprime. We may conclude
  using the previous claim.
\end{proof}

\begin{claim} \label{claim:powers lin indepd}
  Let $\{ p_1, \ldots, p_{k} \}$ be a finite set of distinct primes and let~$n \in \N_{>0}$.
  Then 
  $$
  \gamma_{p_k}^n \not \in \Q[i, \sqrt{2^{p_{k-1}}-1}, \sqrt{2^{p_{k-2}}-1}, \ldots, \sqrt{2^{p_1} - 1}].
  $$
\end{claim}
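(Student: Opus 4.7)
The plan is to combine a Galois-theoretic argument with Niven's theorem, exploiting that $\gamma_{p_k}$ lies on the unit circle.

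First, I would invoke Claim~\ref{claim: 2 power p not in field}, which says that $\sqrt{2^{p_k}-1}$ does not lie in $L := \Q[i, \sqrt{2^{p_{k-1}}-1}, \dots, \sqrt{2^{p_1}-1}]$. Hence $L[\sqrt{2^{p_k}-1}]/L$ is a degree-$2$ extension and admits a non-trivial $L$-automorphism~$\sigma$ sending $\sqrt{2^{p_k}-1}$ to $-\sqrt{2^{p_k}-1}$. Setting $A := (2^{p_k-1}-1)/2^{p_k-1}$ and $B := 1/2^{p_k-1}$ so that $\gamma_{p_k} = iA + B\sqrt{2^{p_k}-1}$, a short computation based on the identity $(2^{p_k-1}-1)^2 + (2^{p_k}-1) = 2^{2(p_k-1)}$ (the statement that $|\gamma_{p_k}|^2 = 1$) will give
\[ \gamma_{p_k} \cdot \sigma(\gamma_{p_k}) = -A^2 - B^2 (2^{p_k}-1) = -1, \]
so $\sigma(\gamma_{p_k}) = -\gamma_{p_k}^{-1}$.

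Next, supposing for contradiction that $\gamma_{p_k}^n \in L$, applying $\sigma$ yields $(-\gamma_{p_k}^{-1})^n = \gamma_{p_k}^n$, equivalently $\gamma_{p_k}^{2n} = (-1)^n$, and hence $\gamma_{p_k}^{4n} = 1$. Thus $\gamma_{p_k}$ would be a root of unity, so $2\arg(\gamma_{p_k})/\pi \in \Q$. Expanding $\gamma_{p_k}^2 = (B^2(2^{p_k}-1) - A^2) + 2iAB\sqrt{2^{p_k}-1}$, one sees that the imaginary part carries the only irrational piece, so $\cos(2\arg \gamma_{p_k}) = \mathrm{Re}(\gamma_{p_k}^2)$ is a \emph{rational} number. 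Niven's theorem (Theorem~\ref{thm:Niven}) then forces $\mathrm{Re}(\gamma_{p_k}^2) \in \{0, \pm 1/2, \pm 1\}$. A short $2$-adic valuation check on the explicit expression $(2^{p_k+1} - 2^{2(p_k-1)} - 2)/2^{2(p_k-1)}$ rules out each of these values for every prime $p_k \geq 3$, since in lowest terms its numerator is odd while its denominator is a power of~$2$ with exponent~$\geq 3$, producing the desired contradiction.

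The main obstacle is the Galois-conjugate identity $\sigma(\gamma_{p_k}) = -\gamma_{p_k}^{-1}$ in the first step, which is what converts the hypothesis $\gamma_{p_k}^n \in L$ into the root-of-unity condition $\gamma_{p_k}^{4n} = 1$; once this is in place, Niven's theorem closes the argument. Note that $p_k = 2$ is genuinely excluded from the above --- consistently with the fact that $\gamma_2 = e^{i\pi/6}$ really is a root of unity --- but the subsequent proof of Proposition~\ref{prop:anindep} invokes Claim~\ref{claim:powers lin indepd} only with $p_k$ taken to be the largest prime of its finite set, in which case $p_k \geq 3$.
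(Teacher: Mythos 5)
Your proof is correct and takes a genuinely different route from the paper's. Where the paper expands $\gamma_{p_k}^n$ via the binomial theorem to obtain $\gamma_{p_k}^n = i^n\bigl(q_1 + q_2\cdot i\sqrt{2^{p_k}-1}\bigr)$ with $q_1, q_2 \in \Q$ and then shows $q_2 \neq 0$ by contradiction, you instead use the nontrivial $L$-automorphism $\sigma$ of $L\bigl[\sqrt{2^{p_k}-1}\bigr]$ together with the identity $\gamma_{p_k}\,\sigma(\gamma_{p_k}) = -1$ (coming from $|\gamma_{p_k}| = 1$) to convert the hypothesis $\gamma_{p_k}^n \in L$ directly into $\gamma_{p_k}^{4n} = 1$. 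Both arguments funnel into the statement that $\gamma_{p_k}$ is a root of unity and conclude with Niven's theorem, applied in the paper to $\sin(2\pi x) = \tfrac{2^{p_k-1}-1}{2^{p_k-1}}$ and in your version to $\cos(4\pi x) = \operatorname{Re}(\gamma_{p_k}^2)$; the $2$-adic valuation check on the latter is correct. The Galois route is cleaner in that it avoids the binomial bookkeeping.

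You also catch a genuine imprecision: the claim as stated fails for $p_k = 2$, since $\gamma_2 = e^{i\pi/6}$ is a $12$-th root of unity, so $\gamma_2^{12} = 1 \in \Q[i]$. In the paper's proof the forbidden quantity is written as $\tfrac{2^{p_k}-1}{2^{p_k}}$, which is indeed never in $\{0,1/2,1\}$; but the quantity that Niven's theorem actually controls is $\sin(2\pi x) = \tfrac{2^{p_k-1}-1}{2^{p_k-1}}$, which \emph{does} equal $1/2$ for $p_k = 2$, so the asserted contradiction does not arise in that case. Your remark that the application in Proposition~\ref{prop:anindep} can always be arranged with $p_k$ odd (and, in the degenerate case where only $\ln(\gamma_2)$ carries a nonzero coefficient, one concludes directly from $\ln(\gamma_2) = i\pi/6 \neq 0$) is the right way to repair this.
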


\begin{proof}
  We compute that
  \begin{align*}
    \gamma_{p_k}^n
    & = \Bigl( i \cdot \frac{2^{p_k-1}-1}{2^{p_k-1}} + \frac{1}{2^{p_k-1}} \cdot \sqrt{2^{p_k} - 1} \Bigr)^n
    \\
    & = \frac{1}{2^{n(p_k-1)}} \cdot
        \sum_{j=0}^n {n \choose j} \cdot i^{n-j} \cdot (2^{p_k-1}-1)^{n-j}\cdot (2^{p_k}-1)^{\frac{j}{2}}.  
  \end{align*}
  We see that the terms contributing to~$\sqrt{2^{p_k}-1}$ are the
  terms where $j$ is odd and that there exist~$q_1,q_2 \in \Q$
  with 
  $$
  \gamma_{p_k}^n = i^n \cdot (q_1 + q_2 \cdot i \cdot \sqrt{2^{p_k}-1}).
  $$

  \emph{Assume} for a contradiction that $q_2$ were zero.
  Then $\gamma_{p_k} \in \Q \cup i \cdot \Q$ and as
  $|\gamma_{p_k}|=1$ we obtain~$\gamma_{p_k}^n \in \{\pm 1, \pm i \}$.
  In particular, $\gamma_{p_k}$ is a root of unity. Therefore, there exists
  an~$x \in \Q$ with
  $$
  \gamma_{p_k} = \cos (2 \pi \cdot x) + i \cdot \sin (2 \pi \cdot x).
  $$
  According to Niven's Theorem~\ref{thm:Niven}, by comparing with the
  definition of~$\gamma_{p_k}$, we see that $\frac{2^{p_k}-1}{2^{p_k}}
  \in \{0, \frac 12, 1 \}$. But if $p_k$ is a prime, then this never
  happens. Hence, $q_2$ is non-zero, and so $\gamma_{p_k}^n \not \in
  \Q[i, \sqrt{2^{p_1}-1}, \ldots, \sqrt{2^{p_{k-1}}-1}]$ by Claim~\ref{claim: 2 power p not in field}.
\end{proof}

\begin{claim} \label{claim:ln gamma lin indep}
  Let $\{ p_1, \ldots, p_k \}$ be a finite set of distinct primes. Then the
  corresponding family~~$\{
\ln(\gamma_{p_j}) \}_{j \in \{1, \ldots, k\}}$ is linearly independent over~$\Q$.
\end{claim}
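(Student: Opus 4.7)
The plan is to reduce a hypothetical nontrivial $\Q$-linear relation among the $\ln(\gamma_{p_j})$ to a multiplicative relation among the $\gamma_{p_j}$, and then obtain a contradiction from Claim~\ref{claim:powers lin indepd}. We use throughout the specific choice $\ln(\gamma_p) := i\cdot\arccos(1-2^{-p+1})$, which is consistent with the identity $\arccos(z) = -i\ln(iz+\sqrt{1-z^2})$ used to define~$\gamma_p$.

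\emph{Assume} for a contradiction that there exist rationals~$a_1,\dots,a_k$, not all zero, with $\sum_{j=1}^k a_j \ln(\gamma_{p_j}) = 0$. Clearing denominators, we may assume that the coefficients are integers~$n_1,\dots,n_k \in \Z$, not all zero. After re-indexing, we may further assume~$n_k \neq 0$. Exponentiating the relation yields
\[
\prod_{j=1}^k \gamma_{p_j}^{n_j} = 1.
\]

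Next, I would isolate the $\gamma_{p_k}$ factor. Set
\[
F_k := \Q[i,\sqrt{2^{p_1}-1},\ldots,\sqrt{2^{p_{k-1}}-1}].
\]
The field $F_k$ is closed under complex conjugation (since it contains~$i$ and each generator $\sqrt{2^{p_j}-1}$ is real). Since each $\gamma_{p_j}$ lies in $F_k$ for $j<k$ and satisfies $|\gamma_{p_j}|=1$, we have $\gamma_{p_j}^{-1}=\overline{\gamma_{p_j}} \in F_k$, so $\gamma_{p_j}^{n_j} \in F_k$ for every $j<k$ and every $n_j\in\Z$. The multiplicative relation therefore rearranges to
\[
\gamma_{p_k}^{|n_k|} \;=\; \prod_{j<k}\gamma_{p_j}^{\varepsilon\, n_j} \;\in\; F_k,
\]
where $\varepsilon=-1$ if $n_k>0$ and $\varepsilon=+1$ if $n_k<0$. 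Since $|n_k|\in\N_{>0}$, this directly contradicts Claim~\ref{claim:powers lin indepd}, and the proof is complete.

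The only slightly delicate point is the passage from an additive relation among (fixed branches of) logarithms to an exact multiplicative relation with value $1$: this is where the specific choice $\ln(\gamma_{p}) = i\arccos(1-2^{-p+1})$ matters, but it is immediate from $\exp(\sum n_j \ln(\gamma_{p_j}))=\prod \gamma_{p_j}^{n_j}$. Everything else is a formal consequence of the prior claims; the substantive work (the field-theoretic step via coprime Mersenne numbers and Besicovitch's theorem) has already been carried out in Claims~\ref{claim: 2 power p not in field} and~\ref{claim:powers lin indepd}.
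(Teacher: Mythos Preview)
Your proof is correct and follows essentially the same route as the paper: assume a $\Q$-relation, clear denominators, exponentiate to a multiplicative relation~$\prod_j \gamma_{p_j}^{n_j}=1$, isolate $\gamma_{p_k}^{|n_k|}$, and contradict Claim~\ref{claim:powers lin indepd}. If anything, your version is tidier in two spots: you pass directly from $\sum n_j\ln(\gamma_{p_j})=0$ to $\prod\gamma_{p_j}^{n_j}=1$ via $\exp$ (the paper takes a slightly roundabout detour through the set~$\{1+m\cdot 2\pi i\}$ and an algebraicity argument), and you explicitly justify why negative powers $\gamma_{p_j}^{-1}$ lie in $F_k$ via $|\gamma_{p_j}|=1$ and closure under conjugation, whereas the paper simply asserts this ``by construction.''
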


\begin{proof}
  \emph{Assume} for a contradiction that this family were linearly
  dependent over~$\Q$, whence over~$\Z$. Thus, there 
  are integers $n_i \in \Z$, not all zero, such that
  $$
    \ln(\gamma_{p_1}^{n_1} \cdots \gamma_{p_k}^{n_k})
  = n_1 \cdot \ln(\gamma_{p_1}) + \dots + n_k \cdot \ln(\gamma_{p_k}) =  0.
  $$
  Without loss of generality we may assume that $n_k > 0$.
  Hence, 
  $$
  \gamma_{p_1}^{n_1} \cdots \gamma_{p_k}^{n_k} \in \{1 + m \cdot 2 \pi i \mid m \in \Z \}.
  $$
  The left-hand side is algebraic over~$\Q$, but the right-hand side
  is only algebraic if $m=0$. Thus, we conclude that
  $\gamma_{p_1}^{n_1} \cdots \gamma_{p_k}^{n_k} = 1$; in other words,
  $$\gamma_{p_k}^{n_k} = \gamma_{p_1}^{-n_1} \cdots \gamma_{p_{k-1}}^{-n_{k-1}}.
  $$
  Moreover, by construction, $\gamma_{p_1}^{-n_1} \cdots
  \gamma_{p_{k-1}}^{-n_{k-1}} \in \Q[i, \sqrt{2^{p_1}-1}, \ldots,
    \sqrt{2^{p_{k-1}}-1}]$. However, this contradicts
  Claim~\ref{claim:powers lin indepd}.  Thus, $\ln(\gamma_{p_1}),
  \ldots, \ln(\gamma_{p_k})$ are linearly independent over $\Q$.
\end{proof}

This finishes the proof of Proposition~\ref{prop:anindep}.
\end{proof}

\section{Solving the group-theoretic problem}\label{sec:SLSE}

As the basic building block for our constructions we pick~$\SLS$ because
its low-degree (co)homology, its second bounded cohomology, and its
quasi-mor\-phisms are already known to basically have the right structure.

\subsection{Basic properties of~$\SLS$}

We collect basic properties of~$\SLS$
needed in the sequel; further information on the (bounded) Euler
class for circle actions can be found in the literature~\cite{BFH,Ghys_circle}.

\begin{prop}[low-degree (co)homology of~$\SLS$]\label{prop:lowdegSLS}
  \hfil
  \begin{enumerate}
  \item The group~$\SLS$ is finitely presented.
  \item The group~$H_1(\SLS;\Z)$ is finite (and non-trivial).
  \item The group~$\SLS$ does \emph{not} admit any non-trivial
    quasi-morphisms.
  \item We have~$H^2_b (\SLS;\R) \cong \R$ and the bounded
    Euler class~$\eu \R \SLS _b$ is a generator.
  \item The evaluation map~$\langle \eu \Z \SLS, \args\rangle \colon
    H_2(\SLS;\Z) \longrightarrow \Z$ has finite kernel and finite cokernel.
  \end{enumerate}
\end{prop}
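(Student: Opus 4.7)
The plan is to exploit Serre's tree decomposition of $\SLS$ as an amalgamated free product. Concretely, the action of $\SLS$ on the Bruhat--Tits tree of $\SL_2(\Q_2)$ yields
\[ \SLS \;\cong\; \SL_2(\Z) *_{\Gamma_0(2)} \SL_2(\Z), \]
where $\Gamma_0(2) \leq \SL_2(\Z)$ is the standard level-$2$ congruence subgroup, of finite index and therefore finitely presented. Standard presentation arguments for amalgams of finitely presented groups over finitely presented subgroups immediately give $(1)$. For $(2)$, abelianising this amalgam identifies $H_1(\SLS;\Z)$ with the pushout (in abelian groups) of $\SL_2(\Z)^{\mathrm{ab}} \cong \Z/12$ with itself along the image of $\Gamma_0(2)^{\mathrm{ab}}$; a direct finite computation shows this pushout is a non-trivial finite group, with the class of $-I$ as a non-zero element.

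For $(4)$ I would invoke the bounded cohomology computations of Burger--Monod for $S$-arithmetic lattices to obtain $H^2_b(\SLS;\R) \cong \R$ with the bounded Euler class as generator; alternatively, one can feed the amalgam above into Bouarich's Mayer--Vietoris sequence in bounded cohomology, using the known bounded cohomology of $\SL_2(\Z)$. Granting $(4)$, for $(3)$ the standard exact sequence
\[ 0 \longrightarrow H^1(\SLS;\R) \longrightarrow \widetilde Q(\SLS) \longrightarrow H^2_b(\SLS;\R) \xrightarrow{c} H^2(\SLS;\R), \]
where $\widetilde Q(\SLS)$ is the space of homogeneous quasi-morphisms, combined with $H^1(\SLS;\R) = 0$ from $(2)$, reduces the vanishing of quasi-morphisms to injectivity of the comparison map $c$; since $H^2_b(\SLS;\R)$ is one-dimensional, it suffices that the real Euler class $\eu\R\SLS$ be non-zero, which follows from $(5)$. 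For $(5)$ itself, the Mayer--Vietoris sequence of the amalgam, combined with the vanishing $H_2(\SL_2(\Z);\Z)=0$ (computable from $\SL_2(\Z) \cong \Z/4 *_{\Z/2} \Z/6$), yields an injection $H_2(\SLS;\Z) \hookrightarrow H_1(\Gamma_0(2);\Z)$ onto the kernel of the diagonal map into $H_1(\SL_2(\Z);\Z)^2$. Since $\Gamma_0(2)$ is virtually free of finite rank, $H_1(\Gamma_0(2);\Z)$ and thus $H_2(\SLS;\Z)$ are finitely generated. A short rational rank count, together with the one-dimensionality from $(4)$ via universal coefficients over $\R$, then shows the Euler class pairing is a $\Q$-isomorphism, hence has finite kernel and cokernel.

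The main obstacle in my view is the bounded-cohomology input $(4)$: the lower bound $H^2_b(\SLS;\R) \neq 0$ is easy from the bounded Euler class, but the upper bound of one-dimensionality genuinely requires non-trivial bounded cohomological machinery (Burger--Monod, or a careful execution of bounded Mayer--Vietoris for the amalgam). Once $(4)$ is in hand, $(3)$ and $(5)$ fall out by routine homological arguments, and $(1)$ and $(2)$ amount to bookkeeping on the Serre amalgam.
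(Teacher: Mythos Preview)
Your treatment of (1), (2), and (4) matches the paper's: Serre's amalgam decomposition for (1) and (2), and Burger--Monod for (4). For the rank of $H_2(\SLS;\Z)$, the paper simply cites Moss for $H_2(\SLS;\Q)\cong\Q$, whereas you propose to extract it from the Mayer--Vietoris sequence of the amalgam; with $H_2(\SL_2(\Z);\Z)=0$ this is a legitimate and more self-contained alternative.

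However, there is a genuine circularity between your arguments for (3) and (5). For (3), you reduce to injectivity of the comparison map $c\colon H^2_b(\SLS;\R)\to H^2(\SLS;\R)$, and since the source is one-dimensional you say it suffices that the real Euler class be non-zero in $H^2(\SLS;\R)$, ``which follows from (5)''. But in your argument for (5), after establishing that $H_2(\SLS;\Z)$ has rank one, you still need the Euler pairing to be rationally non-trivial, and you claim this comes from ``the one-dimensionality from (4) via universal coefficients''. Statement (4) only tells you the \emph{bounded} Euler class is non-zero in $H^2_b$; passing to the ordinary Euler class in $H^2$ is exactly injectivity of $c$, i.e., (3). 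So (3) uses (5) and (5) uses (3).

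The paper breaks this loop by establishing (3) independently: the absence of non-trivial quasi-morphisms on $\SLS$ is a known fact (Calegari, Example~5.38), proved directly from the structure of $\SLS$ without reference to (5). With (3) in hand, the comparison map is injective, hence the real Euler class is non-zero, and combined with the rank-one computation this yields (5). You can repair your outline the same way, or alternatively supply an independent argument that the ordinary Euler class is non-zero (for instance, by showing the central extension $\widetilde\Gamma$ does not split over $\SLS$); but as written, the logical dependence is circular.
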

\begin{proof}
  \emph{Ad~1.}
  The group~$\SLS$ can be written as an amalgamated free
  product of the form
  \[ \SLS \cong \SL_2(\Z) *_{\Gamma_0(2)} \SL_2(\Z),
  \]
  where $\Gamma_0(2)$ is the subgroup of~$\SL_2(\Z)$ of those
  matrices whose lower left entry is divisible by~$2$; this 
  leads to an explicit finite presentation~\cite[p.~81]{serretrees}.

  \emph{Ad~2.}
  In particular, one obtains that $H_1(\SLS;\Z) \cong \Z/3$ is
  finite~\cite[Proposition~3.1]{ademnaffah}.
  (Moreover, applying the Mayer-Vietoris sequence to the decomposition in
  the proof of the first part allows to
  compute the cohomology~$H^*(\SLS;\Z)$~\cite{ademnaffah}.)
 
  \emph{Ad~3.}
  This is one of many examples of groups acting on the circle
  with this property~\cite[Example~5.38]{Calegari}.
  
  \emph{Ad~4.}
  This is a result of Burger and Monod: The
  inclusion~$\SLS \longrightarrow \SL_2(\R)$ induces an
  isomorphism~$H^2_{cb}(\SL_2(\R);\R) \longrightarrow
  H^2_b(\SLS;\R)$~\cite[Corollary~24]{BurgerMonod}\cite[Corollary~4]{buchermonod}.
  Moreover, $H^2_{cb}(\SL_2(\R);\R) \cong \R$, generated by the bounded
  Euler class~\cite{burgermonodsl2}.

  \emph{Ad~5.}
  We abbreviate~$\Gamma := \SLS$.  Because $\Gamma$ is finitely
  presented, $H_2(\Gamma;\Z)$ is a finitely generated Abelian
  group~\cite[II.5]{brown}. Moreover, it has been computed that
  $H_2(\Gamma;\Q) \cong \Q$~\cite[Proposition~2.2]{moss}. Hence,
  $H_2(\Gamma;\Z)$ is virtually~$\Z$ and it suffices to show that the
  evaluation~$\langle \eu \Z \Gamma, \args\rangle \colon
  H_2(\Gamma;\Z) \longrightarrow \Z$ is non-trivial.

  As the space~$Q(\Gamma)$ of 
  quasi-morphisms (modulo trivial quasi-morphisms) is trivial, the
  comparison map~$c_\Gamma \colon H_b^2(\Gamma;\R) \longrightarrow
  H^2(\Gamma;\R)$ is injective~\cite[Theorem~2.50]{Calegari}.  In particular, $\eu \R
  \Gamma = c_\Gamma (\eu \R \Gamma _b)$ is non-trivial
  in~$H^2(\Gamma;\R)$. Therefore, by the universal coefficient
  theorem, also the evaluation $\langle \eu \Z \Gamma,\args\rangle
  \colon H_2(\Gamma;\Z) \longrightarrow \Z$ associated with the
  integral Euler class~$\eu \Z \Gamma \in H^2(\Gamma;\Z)$ is
  non-trivial.
\end{proof}

\subsection{Imitating the universal central extension}

If $\Gamma$ is a perfect group, then its universal central extension~$E$
is a perfect group that satisfies~$H_2(E;\R)\cong 0$. The universal
central extension of~$\Gamma$ can be constructed as the central extension
corresponding to the cohomology class~$\varphi$ in~$H^2(\Gamma; H_2(\Gamma;\Z))$
whose evaluation map~$\langle \varphi,\args\rangle \colon H_2(\Gamma;\Z)
\longrightarrow H_2(\Gamma;\Z)$ is the identity map. Moreover, we may compute the
quasimorphisms on~$E$ from $H^2_b(\Gamma;\R)$, which in turn allows us to
compute the stable commutator length on~$E$ using Bavard's Duality
Theorem~\cite[Section 5]{heuerloeh4mfd}. The group~$\SLS$ is not
perfect, thus it does not have a universal central extension.
Instead, we will choose a central extension of~$\SLS$ that is able
to play the same role in our context.

\begin{prop}\label{prop:H_2}
  Let $\Gamma$ be a finitely presented group with
  finite~$H_1(\Gamma;\Z)$, let $A$ be a finitely generated
  Abelian group, and let $E$ be a central 
  extension group of~$\Gamma$ that corresponds to
  a class~$\varphi \in H^2(\Gamma;H)$ such that the 
  evaluation map~$\langle \varphi,\args\rangle \colon H_2(\Gamma;\Z)
  \longrightarrow A$ has finite kernel
  and finite cokernel.
  Then:
  \begin{enumerate}
  \item The group~$E$ is finitely presented.
  \item We have~$H_1(E;\R) \cong 0$ and $H_2(E;\R) \cong 0$.
  \end{enumerate}
\end{prop}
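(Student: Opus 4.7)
My plan is to treat the two parts separately: part~(1) is a standard fact, while part~(2) requires an analysis of the Lyndon--Hochschild--Serre spectral sequence. For~(1), I would invoke the standard fact that a central extension of a finitely presented group by a finitely generated Abelian group is itself finitely presented: starting from a finite presentation $\Gamma=\langle S\mid R\rangle$, one chooses preimages of $S$ in $E$ and adjoins finitely many generators of $A$; the lifts of the relators in $R$ differ from the identity only by correction terms in the central subgroup~$A$, and one appends the (finitely many) defining relations of~$A$ together with commutation relations expressing that each generator of~$A$ is central.

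For~(2), I would apply the Lyndon--Hochschild--Serre spectral sequence $E^2_{p,q}=H_p(\Gamma;H_q(A;\R))\Rightarrow H_{p+q}(E;\R)$, with trivial coefficients because the extension is central. The computation rests on three observations: (i)~$H_q(A;\R)\cong \Lambda^q(A\otimes_\Z\R)$ since $A$ is finitely generated Abelian and torsion dies over~$\R$; (ii)~$H_1(\Gamma;\R)=0$ because $H_1(\Gamma;\Z)$ is finite; and (iii)~the transgression $d_2\colon E^2_{2,0}=H_2(\Gamma;\R)\to E^2_{0,1}=A\otimes_\Z\R$ is (up to sign) the rationalisation $\langle\varphi,\args\rangle\otimes_\Z\R$ of the hypothesis, and therefore an isomorphism, because any $\Z$-linear map of finite-rank Abelian groups with finite kernel and finite cokernel becomes an isomorphism after tensoring with~$\R$. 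The five-term exact sequence then shows that $A\otimes_\Z\R\to H_1(E;\R)$ is zero (the preceding transgression is already surjective), while $H_1(E;\R)$ injects into $H_1(\Gamma;\R)=0$; hence $H_1(E;\R)=0$.

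For $H_2(E;\R)=0$ it suffices to verify that the three graded pieces $E^\infty_{2,0},\,E^\infty_{1,1},\,E^\infty_{0,2}$ of the filtration on $H_2(E;\R)$ all vanish. The first is the cokernel of the transgression isomorphism, and the second is already zero on the $E^2$-page since $E^2_{1,1}=H_1(\Gamma;\R)\otimes(A\otimes_\Z\R)=0$. The main obstacle is the third: one needs $d_2\colon E^2_{2,1}\to E^2_{0,2}=\Lambda^2(A\otimes_\Z\R)$ to be surjective. I would settle this by passing to the dual cohomology spectral sequence and exploiting its multiplicative (Leibniz) structure. Writing $A^*:=\Hom(A,\R)$, one computes $d_2(\alpha\wedge\beta)=\alpha_*(\varphi)\otimes\beta-\beta_*(\varphi)\otimes\alpha$ on $E_2^{0,2}=\Lambda^2 A^*$, with image in $E_2^{2,1}\cong H^2(\Gamma;\R)\otimes A^*$. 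Since $\alpha\mapsto\alpha_*(\varphi)$ is the isomorphism $A^*\cong H^2(\Gamma;\R)$ dual to the transgression, picking a basis $\{e_i\}$ of $A^*$ with $f_i:=(e_i)_*(\varphi)$ a basis of $H^2(\Gamma;\R)$ shows immediately that the vectors $f_i\otimes e_j-f_j\otimes e_i$ (for $i<j$) are linearly independent in $H^2(\Gamma;\R)\otimes A^*$. Thus this $d_2$ is injective in cohomology, hence surjective in homology, which yields $H_2(E;\R)=0$.
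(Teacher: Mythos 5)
Your proof is correct, but the argument is genuinely different from the paper's and somewhat more involved. The paper never touches the $E_{0,2}$-corner of the spectral sequence: it instead invokes the six-term exact sequence of Eckmann--Hilton--Stammbach for central extensions,
\[
  H_1(E;\Z)\otimes_\Z A \to H_2(E;\Z) \to H_2(\Gamma;\Z) \xrightarrow{\ \beta\ } A \to H_1(E;\Z) \to H_1(\Gamma;\Z)\to 0,
\]
whose crucial feature is the \emph{explicit} leftmost term~$H_1(E;\Z)\otimes_\Z A$. Working over~$\Z$, one first deduces that $H_1(E;\Z)$ is finite (from finiteness of $\operatorname{coker}\beta$ and $H_1(\Gamma;\Z)$), hence so is the leftmost term; since $\ker\beta$ is also finite, $H_2(E;\Z)$ is sandwiched between finite groups and is therefore finite, and then $H_2(E;\R)\cong H_2(E;\Z)\otimes_\Z\R=0$ by universal coefficients. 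This entirely sidesteps the surjectivity of $d_2\colon E^2_{2,1}\to E^2_{0,2}=\Lambda^2(A\otimes\R)$, which is the most delicate step in your argument. What you buy with the spectral-sequence route is that it works directly over $\R$ and makes the mechanism (cancellation against $\Lambda^*(A\otimes\R)$) transparent; what the paper buys is compactness, since the Eckmann--Hilton--Stammbach sequence has already done the bookkeeping for you. Your Leibniz-rule computation and the linear-independence check of $\{f_i\otimes e_j - f_j\otimes e_i\}_{i<j}$ are both sound (and correctly use that $\alpha\mapsto\alpha_*(\varphi)$ is an isomorphism $A^*\to H^2(\Gamma;\R)$, dual to the rationalised evaluation map). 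One cosmetic slip: $E^\infty_{2,0}$ is the \emph{kernel} of the transgression $d_2\colon E^2_{2,0}\to E^2_{0,1}$, not its cokernel; since the transgression is an isomorphism both vanish, so the conclusion is unaffected.
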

\begin{proof}
  The central extension group~$E$ fits into a short
  exact sequence of the form
  $\xymatrix{%
    1 \ar[r]
    & A \ar[r]
    & E \ar[r]
    & \Gamma \ar[r]
    & 1.}
  $

  \emph{Ad~1.}
  Because $A$ is finitely generated, the central extension group~$E$
  of~$\Gamma$ by~$A$ is also finitely presented.

  \emph{Ad~2.}
  Because the extension is central, we have the associated
  exact sequence
  \[ \makebox[0pt]{\xymatrix@=2em{%
    H_1(E;\Z) \otimes_\Z A \ar[r] 
    & H_2(E;\Z) \ar[r]
    & H_2(\Gamma;\Z) \ar[r]^-\beta
    & A \ar[r]
    & H_1(E;\Z) \ar[r]
    & H_1(\Gamma;\Z) \ar[r]
    & 0
    }}
  \]
  by Eckmann, Hilton, and Stammbach~\cite[(1.4) and Theorem~2.2]{eckmannhiltonstammbachI}, 
  where
  \begin{align*}
    \beta \colon H_2(\Gamma;\Z) & \longrightarrow A \\
    \alpha & \longmapsto \langle \varphi,\alpha\rangle.
  \end{align*}
  By assumption, $\beta$ has finite cokernel and $H_1(\Gamma;\Z)$ is finite.
  Hence, $H_1(E;\Z)$ is finite and therefore also the left-most
  term~$H_1(E;\Z) \otimes_\Z A$ is finite. As $\beta$ has
  finite kernel, this implies that $H_2(E;\Z)$ is finite.  Applying
  the universal coefficient theorem, shows that $H_2(E;\R) \cong
  H_2(E;\Z) \otimes_\Z \R \cong 0$.
\end{proof}

With these preparations, we can now give a proof of
Theorem~\ref{theorem:SLSEnew}:

\begin{proof}[Proof of Theorem~\ref{theorem:SLSEnew}]
  We only need to combine Propositions~\ref{prop:lowdegSLS}
  and~\ref{prop:H_2}. As $\widetilde\Gamma$ is finitely generated,
  $H_1(\widetilde \Gamma;\R) \cong 0$ implies that $H_1(\widetilde
  \Gamma;\Z)$ is finite.
\end{proof}

\subsection{More on almost universal extensions}

Let us mention that the same procedure as in the previous proofs also
works in other, similar, situations:

\begin{setup}\label{setup:scl}
  Let $\Gamma$ be a group with a given orientation preserving continuous
  action on~$S^1$ with the following properties:
  \begin{itemize}
  \item The group~$\Gamma$ is finitely presented.
  \item The group~$H_1(\Gamma;\Z)$ is finite.
  \item The group~$\Gamma$ does \emph{not} admit any non-trivial quasi-morphisms.
  \item We have~$H^2_b(\Gamma;\R) \cong \R$ and the bounded Euler class~$\eu \R \Gamma_b$
    is a generator.
  \end{itemize}
  In this situation, we denote the central extension group of~$\Gamma$ associated
  with the Euler class~$\eu \Z \Gamma \in H^2(\Gamma;\Z)$ by~$\widetilde \Gamma$. 
\end{setup}

We have already seen in the previous propositions that $\SLS$ fits
into this setup. Another prominent example is Thompson's group~$T$,
which is even perfect; the condition on~$H^2_b$ follows from explicit
cohomological computations~\cite[Proposition~5.6]{heuerloeh4mfd},
based on calculations by Ghys and Sergiescu~\cite{GS}.

\begin{prop}\label{prop:euler}
  Let $\Gamma$ be as in Setup~\ref{setup:scl}. Then:
  \begin{enumerate}
  \item The evaluation map~$\langle \eu \Z \Gamma, \args\rangle \colon H_2(\Gamma;\Z)
    \longrightarrow \Z$ is non-trivial.
  \item Let $H := H_2(\Gamma;\Z)$, let $ m\in \N_{>0}$ be a generator
    of~$\im \langle \eu \Z \Gamma,\args\rangle \subset \Z$ (which is
    non-zero by the first part), and let $\varepsilon := 1/m \cdot
    \langle \eu \Z \Gamma,\args\rangle \colon H \longrightarrow \Z$. Then
    there exists a $\varphi \in H^2(\Gamma;\Z)$ with
    \[ H^2(\id_\Gamma;\varepsilon) (\varphi) = \eu \Z \Gamma
    \qand
    \langle \varphi,\args\rangle = m \cdot \id_H.
    \]
  \item Let $E$ be the central extension group of~$\Gamma$ associated
    with~$\varphi$. Then there exists an epimorphism~$\psi \colon E \longrightarrow \widetilde \Gamma$
    with~$\psi|_H = \varepsilon \colon H \longrightarrow \Z$ and~$\ker \psi \subset H$.
  \end{enumerate}
\end{prop}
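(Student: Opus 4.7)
The plan is to prove the three statements in sequence, using the same toolbox of comparison map, universal coefficient theorem, and the correspondence between $H^2$ and central extensions that underlies Propositions~\ref{prop:lowdegSLS} and~\ref{prop:H_2}. The main obstacle will be part~2, since producing $\varphi$ requires not only a UCT lift with the prescribed evaluation on $H_2$ but also an $\Ext^1$-correction so that $H^2(\id_\Gamma;\varepsilon)(\varphi)$ hits $\eu\Z\Gamma$ on the nose; this correction will rely on the vanishing of $\Ext^2$ over~$\Z$.

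For part~1, I would repeat the argument used in the proof of Proposition~\ref{prop:lowdegSLS}(5): since $\Gamma$ admits no non-trivial quasi-morphisms, the comparison map $c_\Gamma \colon H^2_b(\Gamma;\R) \longrightarrow H^2(\Gamma;\R)$ is injective, so $\eu\R\Gamma = c_\Gamma(\eu\R\Gamma_b)$ is non-zero in $H^2(\Gamma;\R)$. Because $\R$ is divisible (hence injective as an abelian group), $\Ext^1_\Z(H_1(\Gamma;\Z),\R)$ vanishes, and evaluation on $H_2(\Gamma;\Z)$ therefore detects $\eu\R\Gamma$. This evaluation factors as $\iota \circ \langle \eu\Z\Gamma,\args\rangle$ via $\iota\colon\Z\hookrightarrow\R$, so $\langle \eu\Z\Gamma,\args\rangle$ is non-trivial.

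For part~2, the universal coefficient theorem with coefficients in $H$ gives the short exact sequence
\[
0 \longrightarrow \Ext^1(H_1(\Gamma;\Z),H) \longrightarrow H^2(\Gamma;H) \longrightarrow \Hom(H,H) \longrightarrow 0.
\]
Surjectivity on the right lets me lift $m \cdot \id_H$ to some $\varphi_0 \in H^2(\Gamma;H)$. Naturality of UCT in the coefficients gives
\[
\bigl\langle H^2(\id_\Gamma;\varepsilon)(\varphi_0),\args\bigr\rangle = \varepsilon \circ (m \cdot \id_H) = \langle \eu\Z\Gamma,\args\rangle,
\]
so the error $H^2(\id_\Gamma;\varepsilon)(\varphi_0) - \eu\Z\Gamma$ lies in $\Ext^1(H_1(\Gamma;\Z),\Z)$. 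Writing $T := \ker\varepsilon$, the short exact sequence $0 \to T \to H \to \Z \to 0$ induces a long exact $\Ext$-sequence whose next term $\Ext^2(H_1(\Gamma;\Z),T)$ vanishes, because $\Z$ has global dimension~$1$. Hence $\Ext^1(H_1(\Gamma;\Z),H) \to \Ext^1(H_1(\Gamma;\Z),\Z)$ is surjective; pulling back the error produces a correction $\eta$, and $\varphi := \varphi_0 - \eta$ satisfies both requirements simultaneously (the evaluation is preserved because $\eta$ lives in the kernel of evaluation).

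For part~3, I would invoke the standard correspondence between $H^2(\Gamma;A)$ and equivalence classes of central extensions of $\Gamma$ by $A$, together with its naturality under pushforward along a homomorphism of the kernel. The pushforward $\varepsilon_*(E) := (E \times \Z)\big/\{(h,-\varepsilon(h)) \mid h \in H\}$ is a central extension of $\Gamma$ by $\Z$ whose class is $H^2(\id_\Gamma;\varepsilon)(\varphi) = \eu\Z\Gamma$, hence canonically isomorphic to $\widetilde\Gamma$. The tautological map $\psi \colon E \longrightarrow \widetilde\Gamma$, $e \longmapsto [(e,0)]$, restricts on $H$ to $\varepsilon$; its kernel consists of those $e \in H$ with $\varepsilon(e)=0$, giving $\ker\psi = \ker\varepsilon \subset H$, and surjectivity follows by the five lemma applied to the induced morphism of short exact sequences (using that $\varepsilon$ and $\id_\Gamma$ are surjective).
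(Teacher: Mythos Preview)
Your proof is correct and follows essentially the same route as the paper: part~1 is identical, and for part~2 both arguments reduce to the surjectivity of $\Ext^1(\id;\varepsilon)$ (the paper packages the resulting diagram chase as an appeal to the four lemma, you spell out the lift-and-correct step directly), while for part~3 you simply make explicit the pushforward construction that the paper invokes abstractly. The only cosmetic point is that the paper cites the four lemma rather than writing out the $\Ext^2$-vanishing argument, but the content is the same.
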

\begin{proof}
  \emph{Ad~1.}
  This is the same universal coefficient theorem argument as
  in the last part of (the proof of) Proposition~\ref{prop:lowdegSLS}. 

  \emph{Ad~2.}
  By the naturality of the short exact sequence in the universal
  coefficient theorem, we have the following commutative diagram
  with exact rows:
  \[ \xymatrix{%
    0 \ar[r]
    & \Ext^1_\Z\bigl(H_1(\Gamma;\Z),H\bigr) \ar[r]
      \ar[d]_{\Ext^1(\id;\varepsilon)}
    & H^2(\Gamma;H) \ar[r]^-{\varphi \mapsto \langle \varphi,\args\rangle}
      \ar[d]_{H^2(\id_\Gamma;\varepsilon)}
    & \Hom_\Z(H,H) \ar[r]
      \ar[d]^{f \mapsto \varepsilon \circ f}    
    & 0 
    \\
    0 \ar[r]
    & \Ext^1_\Z\bigl(H_1(\Gamma;\Z),\Z\bigr) \ar[r]
    & H^2(\Gamma;\Z) \ar[r]_-{\varphi \mapsto \langle \varphi,\args\rangle}
    & \Hom_\Z(H,\Z) \ar[r]
    & 0 
  }
  \]
  The left vertical arrow is an epimorphism because $\varepsilon$ is
  an epimorphism and the exactness properties of~$\Ext$ over the
  principal ideal domain~$\Z$. Moreover, the right vertical arrow
  maps~$m \cdot \id_H$ to~$m \cdot \varepsilon = \langle
  \eu\Z\Gamma,\args\rangle$. A short diagram chase therefore proves the
  existence of the desired class~$\varphi \in H^2(\Gamma;H)$ (e.g.,
  using the four~lemma~\cite[Lemma~I.3.2]{maclanehomology}). 

  \emph{Ad~3.}
  Because the extension classes are related
  via~$H^2(\id_\Gamma;\varepsilon)(\varphi) = \eu \Z \Gamma$,
  there exists a group homomorphism~$\psi \colon E \longrightarrow
  \widetilde \Gamma$ with~$\psi|_H = \varepsilon$ that induces the
  identity on~$\Gamma$:
  \[ \xymatrix{%
    1 \ar[r]
    & \Z \ar[r]
    & \widetilde \Gamma \ar[r]
    & \Gamma \ar[r]
    & 1
    \\
    1 \ar[r]
    & H \ar[r] \ar[u]^{\varepsilon}
    & E \ar[r] \ar@{-->}[u]^{\psi}
    & \Gamma \ar[r] \ar@{=}[u]
    & 1
    }
  \]
  As $\varepsilon \colon H \longrightarrow \Z$ is an epimorphism
  also $\psi \colon E \longrightarrow \widetilde \Gamma$ is an
  epimorphism. By construction, $\ker \psi \subset H$.
\end{proof}

\begin{corr}
  Let $\Gamma$ be as in Setup~\ref{setup:scl}, let $H
  := H_2(\Gamma;\Z)$, and let~$E$ be the central extension group
  of~$\Gamma$ associated with the class~$\varphi \in H^2(\Gamma;H)$ of
  Proposition~\ref{prop:euler}.  Then:
  \begin{enumerate}
  \item The group~$E$ is finitely presented and $H_2(E;\R) \cong 0$.
  \item The epimorphism~$\psi \colon E \longrightarrow
    \widetilde \Gamma$ of Proposition~\ref{prop:euler} induces an
    isomorphism
    \begin{align*}
      Q(\psi) \colon Q(\widetilde \Gamma) & \longrightarrow Q(E)
      \\
      [f] & \longmapsto [f \circ \psi]
    \end{align*}
    and both spaces are one-dimensional. 
    Here, $Q$ denotes the space of quasi-morphisms modulo trivial
    quasi-mor\-phisms.
  \item In particular, $\scl_E([E,E]) = \scl_{\widetilde \Gamma}([\widetilde \Gamma,
    \widetilde \Gamma])$
    as subsets of~$\R$.
  \end{enumerate}
\end{corr}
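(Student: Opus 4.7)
The plan is to bootstrap from Propositions~\ref{prop:H_2} and~\ref{prop:euler} using three standard tools from bounded cohomology: Calegari's exact sequence relating~$Q$, $H^2_b$, and~$H^2$; the Gromov--Bouarich principle that epimorphisms with amenable kernel induce isomorphisms on bounded cohomology; and Bavard duality. Part~(1) follows immediately from Proposition~\ref{prop:H_2} applied to~$E$: the only hypothesis to check is that the evaluation map~$\langle \varphi, \args\rangle \colon H \to H$ has finite kernel and cokernel, and by Proposition~\ref{prop:euler} this map equals~$m \cdot \id_H$; multiplication by the positive integer~$m$ on the finitely generated abelian group~$H$ has kernel the (finite) $m$-torsion and cokernel~$H/mH$, which is also finite.

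For part~(2) I would use, for any group~$G$, the natural four-term exact sequence
\[
  0 \longrightarrow H^1(G;\R) \longrightarrow Q(G) \longrightarrow H^2_b(G;\R) \longrightarrow H^2(G;\R).
\]
For~$G = E$, part~(1) and the universal coefficient theorem kill the outer terms, so $Q(E) \cong H^2_b(E;\R)$. I would then chain two applications of the Gromov--Bouarich principle: the central extension $1 \to \Z \to \widetilde \Gamma \to \Gamma \to 1$ has amenable kernel, and $\psi \colon E \to \widetilde \Gamma$ has kernel inside the abelian, hence amenable, group~$H$. Combined with~$H^2_b(\Gamma;\R) \cong \R$ from Setup~\ref{setup:scl}, this yields $H^2_b(\widetilde \Gamma;\R) \cong H^2_b(E;\R) \cong \R$, so $Q(E) \cong \R$. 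For~$\widetilde \Gamma$, the same exact sequence requires showing $H^1(\widetilde \Gamma;\R) \cong 0$ and computing the comparison map. The first comes from the five-term sequence of the central extension, which sandwiches~$H_1(\widetilde \Gamma;\Z)$ between two finite groups (the cokernel of~$\langle \eu\Z\Gamma,\args\rangle$ and~$H_1(\Gamma;\Z)$). For the second, the generator of~$H^2_b(\widetilde \Gamma;\R)$ is the pullback of the bounded Euler class; under the comparison map it lands on the pullback of the integral Euler class, which vanishes because the Euler class is precisely the extension class classifying~$\widetilde \Gamma$. Hence $Q(\widetilde \Gamma) \cong H^2_b(\widetilde \Gamma;\R) \cong \R$, and naturality of the exact sequence with respect to~$\psi$ identifies $Q(\psi)$ with the induced isomorphism on~$H^2_b$.

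Part~(3) then follows from Bavard duality. Fix a generator~$f$ of~$Q(\widetilde \Gamma)$ of defect~$D(f) > 0$; then $f \circ \psi$ generates~$Q(E)$, and $D(f \circ \psi) = D(f)$ because~$\psi$ is surjective. Because both~$Q$ spaces are one-dimensional, Bavard gives $\scl_{\widetilde \Gamma}(g) = |f(g)|/(2 D(f))$ on~$[\widetilde \Gamma, \widetilde \Gamma]$ and analogously for~$E$. The inclusion $\scl_E([E,E]) \subseteq \scl_{\widetilde \Gamma}([\widetilde \Gamma, \widetilde \Gamma])$ is then immediate: for $g \in [E,E]$ we have $\scl_E(g) = |f(\psi(g))|/(2 D(f)) = \scl_{\widetilde \Gamma}(\psi(g))$, and $\psi(g) \in [\widetilde \Gamma, \widetilde \Gamma]$. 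For the reverse inclusion, any $g' \in [\widetilde \Gamma, \widetilde \Gamma]$ expressed as a product of commutators~$\prod [a_i, b_i]$ lifts, letter by letter, to~$\prod [\tilde a_i, \tilde b_i] \in [E,E]$ whose image under~$\psi$ is~$g'$, so the same computation applies. The main obstacle I expect is the $\widetilde \Gamma$ half of part~(2): unlike~$E$, this group is not directly handled by Proposition~\ref{prop:H_2} (the evaluation of~$\eu\Z\Gamma$ may have positive-rank kernel when $\rk H > 1$), so the vanishing of the comparison map must be argued by interpreting the Euler class as the extension class of~$\widetilde\Gamma$.
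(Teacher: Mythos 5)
Your proposal is correct and follows essentially the same route as the paper: part~(1) from Proposition~\ref{prop:H_2} after observing that $m \cdot \id_H$ has finite kernel and cokernel; part~(2) via the exact sequence $Q \to H^2_b \to H^2$, the mapping theorem for amenable kernels applied to both $\widetilde\pi$ and $\psi$, and the vanishing of $H^2(\widetilde\pi;\R)(\eu\R\Gamma)$ because $\eu\Z\Gamma$ is the classifying extension class of $\widetilde\Gamma$; and part~(3) by Bavard duality with equality of defects by surjectivity of $\psi$. The only cosmetic difference is that you carry the $H^1$ term of Calegari's four-term sequence explicitly and check it vanishes, whereas the paper's convention for $Q$ absorbs it; both converge to the same computation.
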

\begin{proof}
  \emph{Ad~1.}
  This follows directly from Proposition~\ref{prop:H_2}.

  \emph{Ad~2.}  We will use bounded cohomology in degree~$2$ to derive
  the statement on quasi-morphisms; 
  we consider the commutative diagram
  \[ \xymatrix{%
    0 \ar[r]
    & Q(\widetilde \Gamma) \ar[r]^-\delta
      \ar[d]_{Q(\psi)}
    & H^2_b(\widetilde \Gamma;\R) \ar[r]^-{c^2_{\widetilde \Gamma}}
      \ar[d]_{H^2_b(\psi;\R)}  
    & H^2(\widetilde \Gamma;\R)
      \ar[d]^{H^2(\psi;\R)}
    \\
    0 \ar[r]
    & Q(E) \ar[r]_-{\delta}
    & H^2_b(E;\R) \ar[r]_-{c^2_{E}}
    & H^2(E;\R)
    }
  \]
  with exact rows.

  By construction, the kernel of the epimorphism~$\psi \colon E
  \longrightarrow \widetilde \Gamma$ lies in the Abelian group~$H$ and
  thus is amenable. By the mapping theorem in bounded
  cohomology~\cite[p.~40]{vbc}\cite[Theorem~4.3]{ivanov},
  the induced map~$H^2_b(\psi;\R) \colon H^2_b(\widetilde \Gamma;\R)
  \longrightarrow H^2_b(E;\R)$ is an isomorphism.

  Because~$H_2(E;\R) \cong 0$, we also have~$H^2(E;\R) \cong 0$. Therefore,
  $\delta \colon Q(E) \longrightarrow H^2_b(E;\R)$ is an isomorphism.

  We now show that also $\delta \colon Q(\widetilde \Gamma)
  \longrightarrow H^2_b(\widetilde \Gamma;\R)$ is an isomorphism: By
  the mapping theorem in bounded cohomology, the extension
  projection~$\widetilde \pi \colon \widetilde \Gamma \longrightarrow
  \Gamma$ induces an isomorphism~$H^2_b(\widetilde \pi ;\R) \colon
  H^2_b(\Gamma;\R) \longrightarrow H^2_b(\widetilde \Gamma;\R)$. As
  $H^2_b(\Gamma;\R)$ is generated by the bounded Euler class, also
  $H^2_b(\widetilde \Gamma;\R)$ is one-dimensional and generated by
  \[ \widetilde \eurm := H^2_b(\widetilde \pi;\R)(\eu \R \Gamma_b).
  \]
  By naturality of the comparison map, we obtain that
  \[ c^2_{\widetilde \Gamma} (\widetilde \eurm)
     = H^2(\widetilde \pi;\R) (\eu \R \Gamma).
  \]
  By construction of the central Euler class extension~$\widetilde
  \Gamma$, we have~$H^2(\widetilde\pi;\Z)(\eu \Z \Gamma) = 0 \in H^2(\widetilde
  \Gamma;\Z)$.  Therefore, $H^2(\widetilde\pi;\R) (\eu\R\Gamma) = 0$ and
  so~$c^2_{\widetilde \Gamma}(\widetilde \eurm) = 0$.  This shows that
  $\delta \colon Q(\widetilde \Gamma) \longrightarrow H^2_b(\widetilde
  \Gamma;\R)$ is an isomorphism.

  Now commutativity of the left square in the diagram above shows
  that $Q(\psi) \colon Q(\widetilde \Gamma) \longrightarrow Q(E)$
  is an isomorphism.

  \emph{Ad~3.}  Let $[f] \in Q(\widetilde \Gamma) \cong \R$ be a
  homogeneous generator, which exists by the second part; then $[f \circ \psi]$
  is a homogeneous generator of~$Q(E)$. Bavard
  duality~\cite{Bavard}\cite[Theorem~2.70]{Calegari} implies that
  for all~$g \in [E,E]$, we have
  \[ \scl_E (g)
  = \frac{\bigl| f \circ \psi(g)\bigr|}{2 \cdot D_E(f \circ \psi)}
  = \frac{\bigl| f(\psi(g))\bigr|}{2 \cdot D_{\widetilde \Gamma}(f)}
  = \scl_{\widetilde \Gamma}\bigl(\psi(g)\bigr);
  \]
  the defects in the denominators are equal because $\psi$ is an
  epimorphism. Again, because $\psi$ is an epimorphism, we conclude
  that $\scl_E$ and $\scl_{\widetilde \Gamma}$ have the same image in~$\R$.
\end{proof}

\section{Right-computability of simplicial volumes}\label{sec:simvolrc}

We now turn to right-computability of the numbers occuring
as simplicial volumes. After recalling basic terminology in
Section~\ref{subsec:rc}, we will prove Theorem~\ref{theorem:simvolrc}
in Section~\ref{subsec:proofsimvolrc}.

\subsection{Right-computability}\label{subsec:rc}

We use the following version of (right-)computability of real numbers,
which is formulated in terms of Dedekind cuts. For basic notions of
(recursive) enumerability, we refer to the book of Cutland~\cite{Cutland}. 

\begin{defn}[right-computable]
  A real number~$\alpha$ is \emph{right-computable} if the set~$\{x
  \in \Q \mid \alpha < x \}$ is recursively enumerable. 
  We say that $\alpha$ is \emph{computable} if both $\{x \in \Q \mid
  \alpha < x \}$ and $\{x \in \Q \mid \alpha > x \}$ are recursively
  enumerable.
\end{defn}

Further information on different notions of one-sided computability of
real numbers can be found in the work of Zheng and
Rettinger~\cite{zhengrettinger}.

There are only countably many recursively enumerable subsets of $\Q$
and thus the set of right computable and computable numbers is
countable.  

We collect some easy properties:
\begin{lemma}
  \hfil
  \begin{enumerate}
  \item
    If $\alpha, \beta \in \R_{\geq 0}$ are right-computable and
    non-negative, then so is $\alpha \cdot \beta \in \R$.
  \item
    If $\alpha \in \R_{\geq 0}$ is a real number and $c \in \R_{>0}$ a computable number such that
    $c \cdot \alpha$ is right-computable, then $\alpha$ is right-computable.
  \end{enumerate}
\end{lemma}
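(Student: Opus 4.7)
The plan is to exploit the characterization of right-computability in terms of recursive enumerability of the upper Dedekind cut, combined with density of~$\Q$ in~$\R$ and standard dovetailing. In both parts, I reduce the problem to showing that a certain set of rationals is recursively enumerable.

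For the first part, the key observation I would establish is that, for $x \in \Q$, one has $\alpha \cdot \beta < x$ if and only if there exist rationals $a, b$ with $\alpha < a$, $\beta < b$, and $a \cdot b < x$. The backward implication uses $\alpha, \beta \geq 0$: then $a > \alpha \geq 0$ and $b > \beta \geq 0$ force $a, b > 0$, whence $a \cdot b > \alpha \cdot \beta$. The forward implication uses density of~$\Q$: if $\alpha \cdot \beta < x$, choose $\varepsilon > 0$ small enough that $(\alpha + \varepsilon)(\beta + \varepsilon) < x$ and pick rationals $a \in (\alpha, \alpha + \varepsilon)$ and $b \in (\beta, \beta + \varepsilon)$. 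Dovetailing the given enumerations of $\{a \in \Q \mid \alpha < a\}$ and $\{b \in \Q \mid \beta < b\}$ and, for each pair $(a, b)$ produced, outputting all rationals $x > a \cdot b$, then enumerates $\{x \in \Q \mid \alpha \cdot \beta < x\}$.

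For the second part, observe first that $\alpha \geq 0$ forces every $z \in \Q$ with $\alpha < z$ to satisfy $z > 0$. For such $z$, the inequality $\alpha < z$ is equivalent to $c \cdot \alpha < c \cdot z$ (using $c > 0$), which by density is equivalent to the existence of a rational $x$ with $c \cdot \alpha < x$ and $x < c \cdot z$; since $z > 0$, the latter condition becomes $x/z < c$. By hypothesis, $\{x \in \Q \mid c \cdot \alpha < x\}$ is recursively enumerable, and since $c$ is computable, so is $\{y \in \Q \mid y < c\}$. Dovetailing both enumerations, for every $z \in \Q_{>0}$ I would search for an~$x$ from the first set such that $x/z$ appears in the second, and output~$z$ as soon as such a pair is found.

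There is no substantial obstacle; the only subtlety worth flagging is that right-computability of $c \cdot \alpha$ alone would not suffice in the second part, as verifying $x < c \cdot z$ requires approximating $c$ from below and thus genuinely uses the left-computability half of the computability hypothesis on~$c$.
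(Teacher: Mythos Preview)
Your proof is correct. For the first part, you and the paper argue the same way: the upper Dedekind cut of~$\alpha\beta$ is recovered from the product of the upper cuts of~$\alpha$ and~$\beta$ (the paper phrases this as the set equality $\{x \in \Q \mid \alpha < x\} \cdot \{y \in \Q \mid \beta < y\} = \{z \in \Q \mid \alpha\beta < z\}$, which is exactly what your density argument establishes).

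For the second part your route differs. The paper does not argue directly but instead observes that if~$c$ is computable and positive then so is~$c^{-1}$; hence $\alpha = c^{-1}\cdot(c\alpha)$ is a product of two non-negative right-computable numbers and the first part applies. Your direct dovetailing argument is equally valid and a little more informative: as you note, it isolates that only the \emph{left}-computability of~$c$ is actually used, whereas the paper's reduction to part one hides this behind the full computability of~$c^{-1}$. The paper's version is shorter; yours gives a slightly sharper statement for free.
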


\begin{proof}
  For the first part we observe that if $\alpha, \beta \geq 0$, then 
  $\{ x \in \Q \mid \alpha < x \}
  \cdot \{ y \in \Q \mid \beta < y \} = \{ z \in \Q \mid \alpha \cdot
  \beta < z \}$.

  For the second part, let $\alpha \in \R_{\geq 0}$ be such that $c\cdot \alpha$ is
  right-computable, where $c$ is computable. Since $c$ is computable
  and positive, so is $c^{-1}$, thus $c^{-1}$ is in particular
  right-computable. Hence $\alpha = c^{-1} \cdot (c \cdot \alpha)$ is the
  product of non-negative right-computable numbers and thus right-computable.
\end{proof}

To a subset~$A \in \N$ we associate the number~$x_A := \sum_{n \in \N}
2^{-n}$. We relate the (right-)computability of~$x_A$ to the
computability of~$A$ as a subset of~$\N$, following Specker
\cite{specker}.
\begin{prop} \label{prop:not right computable}
Let $A \subset \N$ and let $x_A$ be defined as above. Then:
\begin{enumerate}
\item \label{item:left comp} If the set~$A$ is recursively enumerable, then $x_A$ is left-computable and $2-x_A = x_{\N \setminus A}$ is right-computable.
\item \label{item:comp} The set~$A$ is recursive if and only if $x_A$ is computable.
\item \label{item: not right comp number} If $A$ is recursively enumerable but not recursive, then $x_A$ is not right-computable.
\end{enumerate}
\end{prop}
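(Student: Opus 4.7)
My plan is to prove the three assertions in order, with the third following from the first two by contradiction.

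For assertion~(\ref{item:left comp}), given a computable enumeration $A = \{n_0, n_1, \ldots\}$, the partial sums $s_t := \sum_{i \le t} 2^{-n_i}$ form a computable non-decreasing sequence of rationals with $s_t \nearrow x_A$. Then $\{q \in \Q \mid q < x_A\}$ is recursively enumerable, since $q < x_A$ iff $q < s_t$ for some $t$. The identity $2 - x_A = x_{\N \setminus A}$ is just the relation $\sum_{n \in \N} 2^{-n} = 2$. Right-computability of $2 - x_A$ then follows from the rewriting $\{q \in \Q \mid q > 2 - x_A\} = \{2 - p \mid p \in \Q,\ p < x_A\}$ combined with the first half of the item.

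For assertion~(\ref{item:comp}), the forward direction is immediate: if $A$ is recursive then so is $\N \setminus A$, and item~(\ref{item:left comp}) applied to each makes $x_A$ simultaneously left- and right-computable. For the converse, suppose $x_A$ is computable. I case-split on the shape of $A$: if $A$ is finite or cofinite, it is trivially recursive. Otherwise $A$ is both infinite and coinfinite, which forces $x_A$ to be irrational, since the only two subsets producing a given dyadic rational are its terminating binary expansion (a finite subset) and its eventually-$1$ expansion (a cofinite subset). Computability of $x_A$ supplies rational approximations $\ell_k < x_A < u_k$ with $u_k - \ell_k < 2^{-k}$, and since $x_A \cdot 2^n$ is irrational for every $n$, for sufficiently large $k$ we have $\lfloor \ell_k \cdot 2^n \rfloor = \lfloor u_k \cdot 2^n \rfloor$; the parity of this common integer equals the coefficient of $2^{-n}$ in the binary expansion of $x_A$ and hence decides whether $n \in A$.

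Assertion~(\ref{item: not right comp number}) is then formal: under its hypotheses item~(\ref{item:left comp}) makes $x_A$ left-computable, so if $x_A$ were additionally right-computable it would be computable, and item~(\ref{item:comp}) would force $A$ to be recursive, contradicting the hypothesis. The delicate point is the converse direction of item~(\ref{item:comp}): a computable real admits no uniform bit-extraction algorithm when it happens to be a dyadic rational, since both a finite and a cofinite preimage then exist and cannot be told apart from $x_A$ alone. The case analysis above sidesteps this by shifting the dyadic case to the set side of the equivalence, where finiteness or cofiniteness yields recursiveness for free.
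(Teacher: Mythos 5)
Your derivation of item~(3) from items~(1) and~(2) by contradiction is exactly the paper's argument. The difference is that the paper simply cites Specker for items~(1) and~(2) and does not reprove them, whereas you supply self-contained proofs. Your proof of item~(1) is correct modulo the standard normalisation that the enumeration of $A$ be taken without repetitions (or, equivalently, that $s_t$ sum over the \emph{distinct} elements enumerated through stage~$t$), so that $s_t$ really converges to~$x_A$. The forward direction of item~(2) is also fine.

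The converse direction of item~(2), however, contains an error. You claim that if $A$ is infinite and coinfinite then $x_A$ is irrational, and you then invoke irrationality of $x_A \cdot 2^n$ to justify that $\lfloor \ell_k \cdot 2^n \rfloor = \lfloor u_k \cdot 2^n \rfloor$ for all sufficiently large~$k$. But the justification you give --- uniqueness of binary expansions outside the dyadic rationals --- only shows that $x_A$ is not a \emph{dyadic} rational, and indeed $x_A$ can be a non-dyadic rational for infinite, coinfinite~$A$: for instance $A = \{0,2,4,\dots\}$ is infinite and coinfinite, yet $x_A = \sum_{k\ge 0} 4^{-k} = 4/3 \in \Q$. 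The argument is nonetheless easily repaired, because the floor-stabilisation step only requires $x_A \cdot 2^n \notin \Z$ for every~$n$, which is exactly non-dyadicity of~$x_A$, and that is precisely what your binary-expansion argument establishes. Replacing ``irrational'' by ``not a dyadic rational'' (and ``$x_A \cdot 2^n$ is irrational'' by ``$x_A \cdot 2^n$ is never an integer'') makes the proof go through; the remainder --- searching for a $k$ with $\lfloor \ell_k \cdot 2^n \rfloor = \lfloor u_k \cdot 2^n \rfloor$ and reading off membership of~$n$ in~$A$ from the parity --- is correct.
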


\begin{proof}
The first two items are classical results of Specker~\cite{specker}. To see item~\ref{item: not right comp number}, let $A$ be recursively enumerable but not recursive. \emph{Assume} that $x_A$ is right-computable. By item~\ref{item:left comp}, $x_A$ is then also left-computable. Thus, $x_A$ is both left- and right-computable, whence computable. But by item~\ref{item:comp} this implies that $A$ is recursive, which contradicts our assumption.
\end{proof}

\begin{lemma}\label{lem:fractions}
  Let $f \colon \N \longrightarrow \N$ be a function with the
  following property: The set~$\{ (m,n) \in \N \times \N \mid f(m) \leq
  n\} \subset \N \times \N$ is recursively enumerable.  Then
  \[ \inf_{m \in \N_{>0}} \frac{f(m)}m
  \]
  is right-computable.
\end{lemma}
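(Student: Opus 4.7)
The plan is to unpack the definition of right-computability directly and verify that the Dedekind cut $S := \{x \in \Q \mid \inf_{m \in \N_{>0}} f(m)/m < x\}$ is recursively enumerable.

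The first step I would carry out is a key reformulation: for each $x \in \Q$, the inequality $\inf_{m \in \N_{>0}} f(m)/m < x$ holds if and only if there exist $m \in \N_{>0}$ and $n \in \N$ with $f(m) \leq n$ and $n/m < x$. The backward direction is immediate from $f(m)/m \leq n/m < x$. For the forward direction, if $\inf_{m \in \N_{>0}} f(m)/m < x$, pick some $m \in \N_{>0}$ with $f(m)/m < x$ and take $n := f(m)$.

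Armed with this equivalence, a semi-decision procedure for $S$ is routine. By hypothesis the set $R := \{(m,n) \in \N \times \N \mid f(m) \leq n\}$ is recursively enumerable, so we have an algorithm producing a chain $R_0 \subset R_1 \subset \cdots$ of finite subsets of $\N \times \N$ whose union is $R$. Fix also a computable enumeration $(q_i)_{i \in \N}$ of $\Q$ and dovetail as follows: at stage $s$, for each $i \leq s$ and each $(m,n) \in R_s$ with $m > 0$, test the decidable rational inequality $n/m < q_i$; if it holds, enumerate $q_i$. The equivalence above guarantees that a rational $q$ is enumerated at some stage if and only if $q \in S$, so $S$ is recursively enumerable and $\inf_{m \in \N_{>0}} f(m)/m$ is right-computable.

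I do not expect a genuine obstacle here: once the cut is reformulated as an existentially quantified statement whose matrix is r.e. in $(m,n)$ and decidable in $q_i$, the result follows from standard closure properties of recursively enumerable sets under projection and intersection with decidable predicates. The only subtlety worth flagging is the restriction $m \in \N_{>0}$, which is harmless because it is a decidable side condition on the enumerated pairs.
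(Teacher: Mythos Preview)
Your proof is correct and follows essentially the same approach as the paper: both rewrite the cut via the equivalence $\inf_{m>0} f(m)/m < x \iff \exists (m,n)$ with $f(m)\le n$ and $n/m<x$, and then dovetail an enumeration of the r.e.\ set $\{(m,n)\mid f(m)\le n\}$ with an enumeration of~$\Q$ to enumerate the cut. The paper phrases the equivalence as the observation $\inf_{m>0} f(m)/m = \inf_{(m,n)\in R} n/m$, but this is the same content.
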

\begin{proof}
Set~$S :=\{ (m,n) \in \N \times \N \mid f(m) \leq 
  n\}$ and observe that
  $$
  \inf_{m \in \N_{>0}} \frac{f(m)}{m} = \inf_{(m,n) \in S} \frac{n}{m}.
  $$
  There is a Turing machine that, as input, takes a rational number
  and then enumerates all rational numbers above it.  We may
  diagonally use this Turing machine and the enumeration of~$S$ to
  enumerate the set
  $$
  \Bigl\{ x \in \Q \Bigm| \exi{(m,n) \in S} \frac{n}{m} < x \Bigr\}
  = \Bigl\{ x \in \Q \Bigm| \inf_{m \in \N_{>0}} \frac{f(m)}m < x \Bigr\}.
  $$
  Thus indeed $\inf_{m \in \N_{>0}} \frac{f(m)}m$ is right-computable.
\end{proof}

\subsection{Proof of Theorem~\ref{theorem:simvolrc}}\label{subsec:proofsimvolrc}

Let $M$ be an oriented closed connected manifold and $d := \dim
M$. Then $M$ is homotopy equivalent to a finite (simplicial)
complex~$T$~\cite{siebenmann,kirbysiebenmann}; let
$f \colon M \longrightarrow |T|$ be such a homotopy equivalence and for
a commutative ring~$R$ with unit, let
\[ [T]_R := H_d(f;R) \bigl([M]_R\bigr) \in H_d\bigl(|T|;R\bigr).
\]
If $R$ is a normed ring, then we write $\|\cdot\|_{1,R}$ for the
associated $\ell^1$-semi-norm on~$H_d(|T|;R)$. 
Because $f$ is a homotopy equivalence, we have
\[ \| M\| = \bigl\| [M]_\R\bigr\|_{1,\R} = \bigl\| [T]_{\R} \bigr\|_{1,\R}.
\]
Moreover, the $\ell^1$-semi-norm with $\R$-coefficients can be
computed via rational coefficients~\cite[Lemma~2.9]{mschmidt}:
\[ \| M\| = \bigl\| [T]_{\R} \bigr\|_1
          = \bigl\| [T]_{\Q} \bigr\|_{1,\Q}
          = \inf_{m \in \N_{>0}} \frac{\bigl\| m \cdot [T]_{\Z}\|_{1,\Z}}{m}.
\]
The function~$m \longmapsto \| m \cdot [T]_{\Z}\|_{1,\Z}$ satisfies
the hypothesis of Lemma~\ref{lem:fractions}
(see Lemma~\ref{lem:isvenum} below). Applying Lemma~\ref{lem:fractions}
therefore shows that the number~$\|M\|$ is right-computable. 

\begin{lemma}\label{lem:isvenum}
  In this situation, the
  subset
  \[ \bigl\{ (m,n) \in \N \times \N \bigm| \| m \cdot [T]_{\Z} \|_{1,\Z} \leq n \bigr\} \subset \N \times \N
  \]
  is recursively enumerable.
\end{lemma}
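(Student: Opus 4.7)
The plan is to build a Turing machine that, for each pair~$(m,n) \in \N \times \N$ in the set, produces an explicit integer singular cycle of $\ell^1$-norm at most~$n$ representing~$m \cdot [T]_\Z$. The first reduction I would make is to restrict attention to a countable, effectively enumerable family of singular simplices that nevertheless suffices to compute the integral $\ell^1$-semi-norm.

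For each~$k \in \N$, let~$\Sigma_d^{(k)}$ denote the (finite) set of simplicial maps~$\mathrm{sd}^k \Delta^d \to T$, viewed as singular $d$-simplices via the canonical identification of~$|\mathrm{sd}^k \Delta^d|$ with~$\Delta^d$; let~$\Sigma_d := \bigcup_k \Sigma_d^{(k)}$, and introduce the analogous sets in dimension~$d-1$. Since simplicial maps between finite simplicial complexes are determined by their vertex maps, each~$\Sigma_d^{(k)}$ is explicitly listable and~$\Sigma_d$ is r.e. The algorithm then iterates over~$(m, n, k)$ and all integer chains~$c = \sum_i a_i \sigma_i$ with~$\sigma_i \in \Sigma_d^{(k)}$ and~$\sum_i |a_i| \leq n$, of which there are only finitely many. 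For each such~$c$, I would~(i)~compute~$\partial c$ in the PL chain complex (effective, since the faces of a simplicial map of~$\mathrm{sd}^k \Delta^d$ are simplicial maps of~$\mathrm{sd}^k \Delta^{d-1}$), (ii)~test whether~$\partial c = 0$, and (iii)~test whether~$[c] = m \cdot [T]_\Z$ in~$H_d(|T|;\Z)$; step~(iii) reduces, via iterated barycentric subdivision, to an equality in the finitely generated Abelian group~$H_d(M;\Z)$ and is therefore decidable. Whenever all three checks succeed, the machine outputs~$(m,n)$.

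For the procedure to enumerate \emph{all} witnesses, I need the identity
\[
   \|m \cdot [T]_\Z\|_{1,\Z} = \inf \bigl\{ \|c\|_1 \bigm| c \in \Z\langle \Sigma_d\rangle,\ \partial c = 0,\ [c] = m \cdot [T]_\Z \bigr\},
\]
and this is the main obstacle. The inequality~``$\leq$'' is trivial because PL cycles are singular cycles, but for~``$\geq$'' one must replace a given integer singular cycle~$c = \sum_i a_i \sigma_i$ of norm at most~$n$ by a homologous PL cycle of no larger norm. My plan here is to apply the \emph{relative} simplicial approximation theorem inductively on the skeleton of the singular chain: first approximate every vertex occurring in any~$\sigma_i$ by a vertex of~$T$ (in a coherent way, so that identified vertices receive the same image), then extend to compatible PL approximations of all $1$-faces that appear, and so on up to dimension~$d$, arranging at each stage that approximations agree on shared faces. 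The resulting~$\widetilde c := \sum_i a_i \widetilde\sigma_i$ is then a cycle by construction; the homotopies produced at each skeletal stage assemble, via the prism construction, into a chain homotopy witnessing~$c - \widetilde c = \partial h$; and collapsing any coincident~$\widetilde\sigma_i$'s can only decrease the $\ell^1$-norm by the triangle inequality. The real difficulty lies in this coherent skeletal choice: independent simplicial approximations of the~$\sigma_i$ would in general destroy boundary cancellation, and it is precisely the relative (rather than absolute) version of the simplicial approximation theorem that allows the inductive extension to preserve face-matching.
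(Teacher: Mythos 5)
Your overall plan is the same as the paper's: enumerate combinatorial singular chains built from simplicial maps on iterated barycentric subdivisions of~$\Delta^d$, decide algorithmically whether a given chain is a cycle representing~$m \cdot [T]_\Z$, compute its~$\ell^1$-norm, and reduce the integral $\ell^1$-semi-norm to such combinatorial cycles via simplicial approximation. The algorithmic pieces (listing subdivisions and simplicial maps, computing boundaries, comparing homology classes via the finitely generated group~$H_d(T;\Z)$) match.

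However, the step where you establish the key identity has a genuine wrinkle. The relative simplicial approximation theorem, applied skeleton by skeleton while holding the previously-approximated lower skeleton fixed, produces a subdivision of~$\Delta^d$ \emph{relative} to the subcomplex that has already been made simplicial --- and such subdivisions are not iterated barycentric subdivisions in general. Consequently, the cycle~$\widetilde c$ your induction produces need not lie in~$\Z\langle\Sigma_d\rangle$ as you have defined it, so you would not actually be entitled to conclude that your enumeration sees it. (The paper is more generous at exactly this point: it allows simplicial maps from~$\mathrm{sd}^k\Delta^d$ to~$\mathrm{sd}^\ell T$, subdividing the target as well.) The relative theorem is also unnecessary here: fix a single~$k$, chosen by Lebesgue's lemma so large that, for every~$\sigma_i$ in the cycle and every vertex~$w$ of~$\mathrm{sd}^k\Delta^d$, the set~$\sigma_i(\overline{\mathrm{St}}(w))$ lies in some open star of~$T$. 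Since distinct~$\sigma_i,\sigma_j$ that share a face restrict to the \emph{same} continuous map on that face, you can choose the approximating vertices once per equivalence class of pairs~$(i,w)$ under face identification, so the resulting simplicial maps~$\widetilde\sigma_i\col\mathrm{sd}^k\Delta^d\to T$ agree automatically on shared faces. Then~$\partial\widetilde c=0$, the usual prism homotopy gives~$[\widetilde c]=[c]$, and~$|\widetilde c|_1\le |c|_1$, all staying inside your family~$\Sigma_d$. With that repair your proof is fine.
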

\begin{proof}
  We can use a straightforward enumeration of combinatorial
  models of cycles~\cite[proof of Corollary~5.1]{loehodd}: 

  First, $H_d(|T|;\Z)$ is isomorphic to the simplicial
  homology~$H_d(T;\Z)$ of~$T$. Therefore, we can (algorithmically)
  determine a simplicial cycle~$z$ on~$T$ that represents the
  class~$[T]_\Z$; this cycle can also be viewed as a singular cycle
  on~$|T|$.
  
  Inductive simplicial approximation of singular simplices shows that
  for every singular cycle~$c \in C_d(|T|;\Z)$, there exists a
  singular cycle~$c' \in C_d(|T|;\Z)$ with the following properties:
  \begin{itemize}
  \item The cycles $c$ and $c'$ represent the same homology class
    in~$H_d(|T|;\Z)$.
  \item The chain~$c'$ is a \emph{combinatorial singular chain}, i.e.,
    all singular simplices in~$c'$ are simplicial maps from an
    iterated barycentric subdivision of~$\Delta^d$
    to an iterated barycentric subdivision of~$T$.

    Here, each singular simplex in~$c'$ is the simplicial approximation
    of a singular simplex in~$c$. In particular, in general, the image
    of a singular simplex in~$c'$ might touch several simplices of~$T$
    and might pass them several times.
  \item We have~$|c'|_1 \leq |c|_1$.
  \end{itemize}
  This allows us to restrict attention to such combinatorial singular
  chains.  Moreover, the following operations can be performed by
  Turing machines:
  \begin{itemize}
  \item Enumerate all iterated barycentric subdivisions of~$T$
    and~$\Delta^d$.
  \item Enumerate all simplicial maps between two finite simplicial
    complexes.
  \item Hence: Enumerate all combinatorial singular $\Z$-chains of~$T$.
  \item Check, for given~$m \in \N$, whether a combinatorial singular
    $\Z$-chain on~$T$ is a a cycle and represents the class~$m \cdot
    [T]_\Z$ in~$H_d(|T|;\Z)$ (through comparison with the
    corresponding iterated barycentric subdivision of~$z$ in
    simplicial homology).
  \item Compute the $1$-norm of a combinatorial singular $\Z$-chain.
  \end{itemize}
  In summary, we can enumerate the set~$\{(m,c) \mid m \in \N, c \in
  C(m)\}$, where $C(m)$ is the set of all combinatorial $\Z$-cycles
  of~$T$ that represent~$m \cdot [T]_\Z$ in~$H_d(|T|;\Z)$.

  We now consider the following algorithm: Given~$m, n \in \N$,
  we search for elements of $1$-norm at most~$n$ in~$C(m)$.
  \begin{itemize}
  \item If such an element is found (in finitely many steps), then the
    algorithm terminates and declares that~$\|m \cdot [T]_\Z\|_{1,\Z}
    \leq n$.
  \item Otherwise the algorithm does not terminate.
  \end{itemize}

  From the previous discussion, it is clear that
  this algorithm witnesses that the set~$\{(m,n) \in \N \times \N \mid
  \|m \cdot [T]_\Z\|_{1,\Z} \leq n\}$ is recursively enumerable.
\end{proof}

This completes the proof of Theorem~\ref{theorem:simvolrc}.

\begin{rmk}
  It should be noted that the argument above is constructive enough to
  also give a slightly stronger statement (similar to the case of
  integral simplicial volume~\cite[Remark~5.2]{loehodd}): The function
  from the set of (finite) simplicial complexes (with vertices
  in~$\N$) that triangulate oriented closed connected manifolds to the
  set of subsets of~$\Q$ given by
  \[ T \longmapsto \| \, |T|\,\| 
  \]
  is semi-computable (and not only the resulting individual real
  numbers) in the following sense: There is a Turing machine that
  given such a triangulation~$T$ and $x \in \Q$ as input
  \begin{itemize}
  \item halts if $\| \,|T|\, \| < x$ and declares that $\| \,| T|\, \| < x$,
  \item and does not terminate if~$ \| \,|T|\, \| \geq x$.
  \end{itemize}
  But it is known that this function is \emph{not}
  computable~\cite[Theorem~2, p.~88]{weinberger}.
\end{rmk}

{\small
\bibliographystyle{alpha}
\bibliography{bib_l1}}

\vfill

\noindent
\emph{Nicolaus Heuer}\\[.5em]
  {\small
  \begin{tabular}{@{\qquad}l}
DPMMS,    University of Cambridge \\
    \textsf{nh441@cam.ac.uk},
    \textsf{https://www.dpmms.cam.ac.uk/$\sim$nh441}
  \end{tabular}}

\medskip

\noindent
\emph{Clara L\"oh}\\[.5em]
  {\small
  \begin{tabular}{@{\qquad}l}
    Fakult\"at f\"ur Mathematik,
    Universit\"at Regensburg,
    93040 Regensburg\\
    \textsf{clara.loeh@mathematik.uni-r.de}, 
    \textsf{http://www.mathematik.uni-r.de/loeh}
  \end{tabular}}

\end{document}